\newtheorem{theorem}{Theorem}[section]
\newtheorem{coro}{Corollary}[section]
\theoremstyle{remark}
\newtheorem{remark}{Remark}[section]
\newcommand{\be}{\begin{eqnarray}}
\newcommand{\ee}{\end{eqnarray}}
\newcommand{\B}{\mathfrak{B}}
\numberwithin{equation}{section}
 \title[Stochastic solutions for fractional complex heat equations]{%
Stochastic solutions for time-fractional heat equations with complex spatial
variables}
\author{Luisa Beghin}
\author{Alessandro De Gregorio}
\address{ Department of Statistical Sciences, Sapienza, University of Rome.
P.le Aldo Moro, 5, Rome, Italy}
\email{luisa.beghin@uniroma1.it}
\email{alessandro.degregorio@uniroma1.it}
\keywords{Time-fractional diffusive equations, time-changed processes,
complex variable evolutive equations}
\date{\today }
\begin{document}

 \begin{abstract}

We deal with complex spatial diffusion equations with
time-fractional derivative and study their stochastic solutions.
In particular, we complexify the integral operator solution to the
heat-type equation where the time derivative is replaced with the
convolution-type generalization of the regularized Caputo
derivative. We prove that this operator is solution of a complex
time-fractional heat equation with complex spatial variable. This
approach leads to a wrapped Brownian motion on a circle
time-changed by the inverse of the related subordinator. This
time-changed Brownian motion is analyzed and, in particular, some
results on its moments, as well as its construction as weak limit
of continuous-time random walks, are obtained. The extension of
our approach to the higher dimensional case is also provided.
 \medskip

{\it MSC 2010\/}: Primary 26A33;  Secondary 60G22

 \smallskip

{\it Key Words and Phrases}:
complex singular integrals, complex evolution equations, generalized Caputo derivative,
time-changed processes, wrapped Brownian motion

 \end{abstract}

 \maketitle

\section{Introduction} \label{sec1} 

The study of the evolution equations with complex spatial variables is a quite recent research topic
in the theory of the partial differential equations and complex analysis. In the pioneering works
\cite{gal}-\cite{gal1}, the authors proposed two different methods to complexify the spatial variable
appearing in different evolution equations (and keeping the time variable real). In particular,
for the heat equation a possible approach consists in the complexification of the spatial variable
in the linear semigroup operator
$$  
T_tf(x)=\frac{1}{\sqrt{2\pi t}}\int_{-\infty}^{+\infty} f(x-y)e^{-\frac{y^2}{2y}}dy
$$
representing the unique solution to the Cauchy problem
\begin{equation*}
\frac{\partial u}{\partial t}(x,t)= \frac{1}{2}\frac{\partial^2 u}{\partial
x^2}(x,t), \quad u(x,0)=f(x),
\end{equation*}
where $t>0,x\in \mathbb R$ and $f\in$ BUC$(\mathbb R)$.
This approach is interesting because, by exploiting the theory of semigroups of linear operators, it is possible to obtain a new complex version of the heat equation (see \eqref{eq:che} below) and study the properties of its analytic solution (see \cite{gal}-\cite{gal1}). Furthermore, it is worth to observe that in this framework a suitable probabilistic interpretation of the solution to \eqref{eq:che} leads to a wrapped up Brownian motion on a circle. The stochastic analysis of complex diffusion equations still seems to be an unexplored research topic.

The above mentioned theory was developed starting from partial
differential equations involving the standard time derivative.
The aim of this paper is to study complex versions of the
time-fractional heat equations obtained by 
complexifying 
the spatial variable only (and keeping the time variable real).
The main idea is to complexify the spatial variable in the
corresponding integral operator arising in the study of
time-fractional evolution equations. In particular, we study the
stochastic solution of the complex heat equation, when the
time-derivative is replaced by a convolution-type operator, which
generalizes the Caputo fractional derivative. We will adopt
the definition given in \cite{chen}, i.e.%
\begin{equation}
\mathfrak{D}_{t}^{g}u(t):=\frac{d}{dt}\int_{0}^{t}w(t-s)(u(s)-u(0))ds,\qquad
t\geq 0,  \label{chen}
\end{equation}%
where $w(\cdot )$ is the tail L\`{e}vy measure of a subordinator
 $\mathcal{H}_{g}:=\{\mathcal{H}_{g}(t)\}_{t\geq 0}$ and $g(\cdot )$ is its Laplace exponent, i.e.
$\mathbb{E} e^{-\theta \mathcal{H}_{g}(t)}=e^{-g(\theta )t}$, where $t,\theta
\geq 0$ (see Section \ref{sec3} for details on this definition).
The so-called generalized fractional calculus has been developed in recent years,
starting from Kochubei in \cite{koc}, by many authors (see, among the others,
\cite{toaldo}, \cite{giusti}, \cite{kol}). They extend the traditional
construct of fractional derivatives and integrals in order to
allow a wider class of kernels. Indeed, it is immediate to check
that, by choosing $w(t)=t^{-\alpha }/\Gamma (1-\alpha )$, for
$\alpha \in (0,1)$, which coincides with the tail L\'{e}vy measure
of an $\alpha $-stable subordinator $\mathcal{H}_{\alpha },$
the fractional derivative in (\ref{chen}) reduces to the so-called
``regularized Caputo derivative". Thus we will define the latter as
\begin{equation}
\frac{\partial ^{\alpha }}{\partial t^{\alpha
}}u(t):=\frac{1}{\Gamma (1-\alpha
)}\frac{d}{dt}\int_{0}^{t}(t-s)^{-\alpha }(u(s)-u(0))ds,\qquad
t\geq 0.  \label{cap}
\end{equation}%
It has been proved in \cite{meeJAP} that the solution to
\begin{equation*}
\frac{\partial ^{\alpha }}{\partial t^{\alpha }}u(x,t)=\Delta
u(x,t),\qquad t\geq 0,\text{ }x\in \mathbb{R}^{d},\text{ }d\geq 1,
\end{equation*}%
with $u(x,0)=f(x),$ admits the probabilistic representation
  $\mathbb{E}_{x}\left[ f(B(\mathcal{E}_{\alpha }(t)))\right]$,
  where $B:=\{B(t)\}_{t\geq 0} $, is the standard Brownian motion on $\mathbb{R}^{d}$,
with infinitesimal generator $\Delta$, and   $\mathcal{E}_{\alpha }:=\{\mathcal{E}_{\alpha }(t)\}_{t\geq 0}$, 
 is the inverse of the stable subordinator $\mathcal{H}_{\alpha }$ (independent of $B$). 

The previous result has been extended to the case of a strong Markov process
$X:=\{X(t)\}_{t\geq 0}$ (on a separable locally compact Hausdorff space $E$)
whose transition semigroup is a uniformly bounded strong continuous
semigroup in some Banach space and has infinitesimal generator $\mathcal{A}$.
In this case, the solution to %
\begin{equation*}
\left( \mathfrak{D}_{t}^{g}+b\frac{\partial }{\partial t}\right) u(x,t)=%
\mathcal{A}u(x,t),\qquad t\geq 0,\text{ }x\in E,\text{ }b\geq 0,
\end{equation*}%
with $u(x,0)=f(x),$ is represented by
  $\mathbb{E}_{x}\left[ f(X(\mathcal{E}_{g}(t)))\right]$, where $\mathcal{E}_{g}:=\{\mathcal{E}_{g}(t)\}_{t\geq 0}$,
  is the inverse of the general subordinator $\mathcal{H}_{g}$, with
drift $b,$ and is independent of $X$ (see \cite{chen}, for details).

If we denote the unit disk as  $D:=\{z\in \mathbb{C};|z|<1\}$ and
we consider the space $A(D)=\{f:\overline{D}\rightarrow :f$ is
analytic on $D$, continuous on $\overline{D}\}$, endowed with the
uniform norm, then we study here the solution to the following Cauchy problem
\begin{equation*}
\left( \mathfrak{D}_{t}^{g}+b\frac{\partial }{\partial t}\right) u(z,t)=%
\frac{1}{2}\frac{\partial ^{2}u}{\partial \varphi ^{2}}(z,t),\quad
(t,z)\in (0,\infty )\times D\setminus \{0\},\ z=re^{i\varphi },
\end{equation*}%
with $u(z,0)=f(z),$ where $f(z)\in A(D),$ $z\in \overline{D}$.

\smallskip 

The paper is organized as follows. Section \ref{sec2} contains the
probabilistic interpretation of the solution to the complex Cauchy
problem introduced in \cite{gal} and a discussion on the
properties of the circular Brownian motion. The generalized
fractional setting and the complex time-fractional heat equation
are introduced in Section \ref{sec3}, where the stochastic
solution related to the related complex Cauchy problem is
obtained. Section \ref{sec4} is devoted to the analysis of the
time-changed Brownian motion emerging in the previous section.
Some results on the moments of the process are provided, as well
as the construction of the time-changed process based on the
convergence of time-continuous random walks. Furthermore,
some special cases involving stable and tempered stable subordinators are examined.
The last section contains the analysis of the complex time-fractional heat equation
in higher dimensions.

\section{On the probabilistic meaning of heat-type equations with complex
space variables} \label{sec2}

\setcounter{section}{2} \setcounter{equation}{0} \setcounter{theorem}{0} 

Let $D:=\{z\in \mathbb{C};|z|<1\}$ be the open unit disk and introduce the Banach space $(A(D),\Vert \cdot \Vert ),$ where $A(D)=\{f: \overline D\to \mathbb C: f$ is analytic
on $D$, continuous on $\overline D\}$ , endowed with
the uniform norm $\Vert f\Vert =\sup \{|f|;z\in \overline{D}\}$. If $f\in A(D)$
then it can be represented in the series form $f(z)=\sum_{k=0}^{\infty
}a_{k}z^{k}$.

In the interesting paper \cite{gal}, it was proved (see Theorem 2.1 in \cite{gal}) that the singular (at $t=0$)
complex integral (that is known as a Gauss-Weierstrass integral)
\begin{equation}  \label{c0}
W_tf(z)= \frac{1}{\sqrt{2\pi t}} \int_{-\infty}^{+\infty}
f(ze^{-iu})e^{-u^2/2t}du,\quad t\geq 0,
\end{equation}
is a $C_0$-contraction semigroup of linear operators on $A(D).$ Furthermore, $u(z,t)=W_tf(z),$ is the unique solution (with $u(z,t)\in A(D)$  for a fixed $t$) for the Cauchy problem
\begin{equation}\label{eq:che}
\frac{\partial u}{\partial t}(z,t)= \frac{1}{2}\frac{\partial^2 u}{\partial
\varphi^2}(z,t), \quad z = re^{i\varphi}, 0<r<1, \varphi\in [0,2\pi),
\end{equation}
under the initial condition
\begin{equation}
u(z,0)= f(z), \quad z\in \overline D,
\end{equation}
where $f\in A(D).$

Here we briefly discuss the interesting probabilistic meaning of
representing the solution of a complexified Cauchy problem in this
way. Indeed, it is evident from \eqref{c0}, that the solution of
the latter can be expressed as
\begin{equation}
u(z,t) = \mathbb{E}f\left(ze^{-iB(t)}\right) = \mathbb{E}f\left(\mathfrak{B}%
_z(t)\right),
\end{equation}
where $B:=\{B(t)\}_{t\geq 0}$ is the $\mathbb R$-valued Brownian motion on $(\Omega,\mathcal F,\mathbb P)$ and $\mathfrak{B}%
_z(t)=ze^{-iB(t)}$. This means that the probabilistic representation of the
solution for this complexified Cauchy problem is directly related to a
circular or wrapped Brownian motion $\mathfrak{B}%
_z:=\{\mathfrak{B}_z(t)\}_{t\geq 0}$ moving on a circle with radius $r\in(0,1]$ (hereafter denoted by $\mathbb{S}_r$) and starting point $z$. Furthermore, $\mathfrak{B}$ stands for $\mathfrak{B}_1.$

For the sake of simplicity, we set $z=1.$ From the properties of the classical $\mathbb R$-valued Brownian motion, it is easy to characterized $\mathfrak{B}.$ Let $\overline{\mathfrak{B}}$ be the complex conjugate of $\mathfrak{B}.$ We observe that the wrapped Brownian motion $\mathfrak{B},$ satisfies the following properties:

1) $\mathfrak{B}(0)=1$ a.s.;

2) $\mathfrak{B}(t_k)\overline{\mathfrak{B}}(t_{k-1})$ with $k=1,2,...,n\in \mathbb N,$ $0=:t_0\leq t_1<t_2<...<t_n<\infty$ are independent;

3) $\mathfrak{B}(t)\overline{\mathfrak{B}}(s)$ has the same distribution of $\mathfrak{B}(t+h)\overline{\mathfrak{B}}(s+h),$ where $0\leq s<t, h\geq -s;$

4) for $0\leq s\leq t,$  $$\mathfrak{B}(t)\overline{\mathfrak{B}}(s)\sim WN(0,e^{-\frac{t-s}{2}} ),$$ where $WN(\mu, e^{-\frac{\sigma^2}{2}}),\mu\in \mathbb R, \sigma^2>0,$ stands for a wrapped normal random variables with probability density function given by
$$f(\varphi)=\frac{1}{\sqrt{2\pi}\sigma}\sum_{k=-\infty}^\infty e^{-\frac{(\varphi-\mu+2k\pi)^2}{2 \sigma^2}},\quad \varphi\in[0,2\pi).$$
This result follows by standard arguments on the wrapped distributions; i.e. by wrapping the $N(0,t-s)$ onto the circle (see, e.g., \cite{mardia}).

5) $\mathfrak{B}$ is a wrapped Gaussian process; i.e.  let $0=:t_0\leq t_1<t_2<...<t_n<\infty,$ the random vector
 $(\mathfrak{B}(t_1),\mathfrak{B}(t_2),...,\mathfrak{B}(t_n))$ is multivariate wrapped normal in the following sense
$$\prod_{k=1}^n(\mathfrak{B}(t_k))^{\alpha_k}, \quad \alpha_k\in\mathbb R,$$
admits a one-dimensional wrapped gaussian distribution. Indeed,
$$\prod_{k=1}^n(\mathfrak{B}(t_k))^{\alpha_k}=e^{i\sum_{k=1}^n\alpha_kB(t_k) }.$$
Since $B$ is a Gaussian process, it follows that
$\sum\limits_{k=1}^n \alpha_kB(t_k)\sim N(0,\sum\limits_{k=1}^n \alpha_k^2 t_k)$.
 Then, as in the previous point
$$\prod_{k=1}^n(\mathfrak{B}(t_k))^{\alpha_k}\sim WN(0, e^{-\frac{\sum_{k=1}^n \alpha_k^2 t_k}{2}}).$$

\section{Time-fractional diffusive-type equations with a complex spatial variable} \label{sec3} 

\setcounter{section}{3} \setcounter{equation}{0} \setcounter{theorem}{0}

Let us introduce a time-fractional version of the complex heat equation \eqref{eq:che} and study its stochastic solution.

Let $g:(0,+\infty )\rightarrow \mathbb{R}$ be a Bernstein function (i.e. a
non-negative, $C^{\infty }$ function such that $(-1)^{k-1}g^{(k)}(x)\leq 0$,
$\forall x>0$, $k\in \mathbb{N}$). Then, it is well-known that the following
representation holds (see e.g, \cite{schilling})
\begin{equation}
g(x)=a+bx+\int_{0}^{\infty }(1-e^{-sx})\nu (ds),\quad b\geq 0,  \label{gx}
\end{equation}%
where $\nu (\cdot )$ is a non-negative measure on $(0,+\infty )$, satisfying
the condition
\begin{equation}
\int_{0}^{\infty }(z\wedge 1)\nu (dz)<\infty ,  \notag
\end{equation}%
i.e. $\nu $ is a L\'{e}vy measure.

Let $w(s) = \int_s^{+\infty}\nu(dz)$ be its tail, in this paper we consider
the following convolution-type derivative (see \cite{chen})
\begin{equation}\label{eq:cd}
\mathfrak{D}_t^gu(t):=\frac{d}{dt}\int_0^t w(t-s)(u(s)-u(0))ds.
\end{equation}
Typically $w$ is a non-negative decreasing function on $(0,+\infty)$ that
blows up at $x = 0$ and locally integrable on $[0,\infty)$. We refer to \cite{chen} for the functional setting,
observing that obviously this definition is a generalization of the Caputo
fractional derivative (see, e.g., \cite{kilbas}): the latter is recovered, as a special case, for $%
w(s)= \frac{s^{-\alpha}}{\Gamma(1-\alpha)}$, with $\alpha \in
(0,1)$. Observe that we used a quite different notation from
\cite{chen} in order to underline the connection between this
generalized fractional derivative and the particular choice of the
underlying Bernstein function $g$. We remark that a similar
probabilistic approach to the generalized time-fractional
derivatives have been developed in \cite{toaldo}. It is similar
but not equivalent. Hereafter, we exclude compound Poisson
subordinator, namely we assume that $a=0$ and that the tail
measure $w(\cdot )$ is
infinite in the origin and absolutely continuous on $(0,+\infty )$. Let now $\mathcal H_g:=%
\{\mathcal{H}_{g}(t)\}_{t\geq 0}$ be the subordinator with L\'{e}vy measure $%
\nu $ and Laplace exponent $g$, i.e.
\begin{equation}
\mathbb{E}\left( e^{-\theta \mathcal{H}_{g}(t)}\right) =e^{-tg(\theta
)},\quad \theta \geq 0.  \label{be}
\end{equation}%
(see, e.g., \cite{app}).
We denote by $\mathcal{E}_{g}:=\{\mathcal{E}_{g}(t)\}_{t\geq 0}
$, the inverse (or hitting-time)
process $\mathcal E_g(t):=\inf\{s>0: \mathcal H_g(s)>t\}$, i.e.
\begin{equation}
\bigg\{\mathcal{E}_{g}(t)\geq s\bigg\}=\bigg\{\mathcal{H}_{g}(s)\leq t\bigg\}%
,\quad \forall s,t\in \mathbb{R}^{+}.
\end{equation}%
By the assumptions on $w$, the subordinator $t\mapsto \mathcal{H}_{g}(t)$ associated
to $g$ is strictly increasing a.s. As a consequence, its inverse $t\mapsto
\mathcal{E}_{g}(t)$ is continuous a.s. We recall that the
time-Laplace transform of the density of  $\mathcal{E}_{g}(t)$ denoted by $m_g(s,t):=\mathbb{P}\left( \mathcal{E}_{g}(t)\in ds\right) /ds$
reads
\begin{equation}\label{eq:lt}
\int_{0}^{\infty }e^{-\theta t}m_g(s,t) dt=\frac{g(\theta )}{\theta }e^{-sg(\theta )}, \quad \theta \geq 0,
\end{equation}%
see, for example, Proposition 3.2 in \cite{toaldo}.

We now consider the standard $\mathbb R$-valued Brownian motion $B$, time-changed by $%
\mathcal{E}_{g}(t), t\geq 0,$ (under the assumption that $B$ and $\mathcal{E}_{g}$
are mutually independent); i.e. $\{B(\mathcal{E}_{g}(t))\}_{t\geq 0}$. Then, the density of $B(\mathcal{E}_{g}(t))$ for a fixed $t> 0,$ is
given by
\begin{equation}
\ell _{g}(x,t)=\int_{0}^{+\infty }\frac{e^{-\frac{x^{2}}{2y}}}{\sqrt{2\pi y}}%
m_g(y,t)dy,\quad x\in \mathbb R.  \label{lg}
\end{equation}

For all $t> 0$, we can define on $D$, the following complex
integral
\begin{equation}\label{eq:c0tg}
W_{t}^{g} f(z):=\int_{\mathbb{R}}f(ze^{-iu})\ell _{g}(u,t)du,\quad z\in \overline D,
\end{equation}%
where $\ell_{g}$ is given in \eqref{lg}.  The following stochastic interpretation of \eqref{eq:c0tg} emerges
\begin{equation}\label{eq:c0tg2}
W_{t}^{g} f(z)=\mathbb{E}%
f(\B_g^z(t)),
\end{equation}%
where
\begin{equation}\label{eq:sucb}
\B_g^z:=\{\B_g^z(t)\}_{t\geq 0}:=\{ze^{-i B(\mathcal E_g(t))}\}_{t\geq 0},
\end{equation}
is the time-changed circular Brownian motion moving on a circle
with radius $r\in(0,1]$ with starting point $z,$ obtained from the
wrapped up process $\B^z$ introduced  in the previous section. We
observe that $$W_{t}^{g}f(z) =\mathbb E [W_{\mathcal
E_g(t)}f(z)]=\int_0^\infty W_y f(z)m_g(y,t)dy,$$ that is
$W_{t}^{g}$ arises by the time-change of the $C_0$-semigroup
\eqref{c0}.

We have the following analytic results concerning the convolution operator $W_t^g$.

\begin{theorem}\label{teofc}
(i) If $f\in A(D)$, then we have that for any $t>0,$ we have that $$W_{t}^{g}: A(D)\to A(D);$$ i.e. $W_{t}^{g}f(z)$ is
analytic in $D$
\begin{equation}
W_{t}^{g}f(z)=\sum_{k=0}^{\infty }a_{k}z^{k}d_{k}(t),  \label{gsol}
\end{equation}%
where $d_{k}(t):=\mathbb{E}[e^{-\frac{k^{2}}{2}\mathcal{E}_{g}(t)}],$ and if $f$ is continuous on $\overline{D},$ the integral $%
W_{t}^{g}f(z)$ is continuous on $\overline{D}$ as well.
Furthermore
$$R_\theta W_{t}^{g}f(z):=\int_0^\infty e^{-\theta t} W_{t}^{g}f(z) dt=\sum_{k=0}^{\infty }a_{k}z^{k}\tilde d_{k}(\theta),\quad \theta \geq 0, $$
where
\begin{equation}
\tilde{d_{k}}(\theta ):=\frac{g(\theta )/\theta }{g(\theta )+\frac{k^{2}}{%
2}}.
\end{equation}%

(ii) Moreover, $u(z,t)=W_t^g f(z)$ is the unique solution, belonging to $A(D)$ for any $t\geq 0,$ of the Cauchy problem
\begin{align}\label{cprob}
&\left(\mathfrak{D}_t^g+ b \frac{\partial}{\partial t}\right) u(z,t) = \frac{1%
}{2}\frac{\partial^2u}{\partial \varphi^2}(z,t),\\
 &\quad (t,z) \in (0,\infty)%
\times D\setminus\{0\}, \ z = r e^{i\varphi}, \notag
\end{align}
\begin{equation}
u(z,0) = f(z), \quad f(z)\in A(D), z \in \overline{D}.  \label{cprob1}
\end{equation}
\end{theorem}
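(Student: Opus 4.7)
The plan is to prove both parts by expanding $f$ in its power series and exploiting the fact that the wrapping acts diagonally on monomials, so that the whole problem reduces to a collection of scalar ODEs indexed by $k$.

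For part (i), I would write $f(z)=\sum_{k\geq 0} a_k z^k$ (uniformly convergent on $\overline D$) and substitute $w=ze^{-iu}$ to get $f(ze^{-iu})=\sum_k a_k z^k e^{-iku}$ with the dominating bound $|a_k|$. This justifies exchanging the sum and the integral in \eqref{eq:c0tg}, reducing the problem to computing
\[
\int_\mathbb{R} e^{-iku}\ell_g(u,t)\,du = \mathbb{E}\, e^{-ikB(\mathcal E_g(t))} = \mathbb{E}\, e^{-k^2 \mathcal E_g(t)/2} = d_k(t),
\]
where the second equality uses conditioning on $\mathcal E_g(t)$ together with the independence of $B$ and $\mathcal E_g$. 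The bound $|d_k(t)|\leq 1$ then transfers analyticity in $D$ and continuity on $\overline D$ from $f$ to $W_t^g f$. The Laplace-transform identity follows by applying \eqref{eq:lt} termwise:
\[
\int_0^\infty e^{-\theta t} d_k(t)\,dt = \int_0^\infty e^{-k^2 s/2}\,\frac{g(\theta)}{\theta}\,e^{-sg(\theta)}\,ds = \tilde d_k(\theta).
\]

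For part (ii), the strategy is termwise verification. With $z=re^{i\varphi}$ we have $\partial_\varphi^2 z^k=-k^2 z^k$, so the right-hand side of \eqref{cprob} becomes $-\tfrac12\sum_k k^2 a_k z^k d_k(t)$. It therefore suffices to show that each coefficient satisfies
\[
(\mathfrak D_t^g + b\,\partial_t)\, d_k(t) = -\tfrac{k^2}{2}\, d_k(t),\qquad d_k(0)=1.
\]
I would check this on the Laplace side: from $g(\theta)=b\theta+\theta\int_0^\infty e^{-\theta s}w(s)\,ds$ the Laplace transform of the convolution-type derivative is $\mathcal L[\mathfrak D_t^g h](\theta)=(g(\theta)-b\theta)(\tilde h(\theta)-h(0)/\theta)$, and combining with $\mathcal L[\partial_t h](\theta)=\theta\tilde h(\theta)-h(0)$ gives $\mathcal L[(\mathfrak D_t^g+b\partial_t)d_k](\theta)=g(\theta)\tilde d_k(\theta)-g(\theta)/\theta$. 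Inserting $\tilde d_k(\theta)$ from part (i) yields exactly $-\tfrac{k^2}{2}\tilde d_k(\theta)$, and Laplace inversion closes the verification. The initial condition $u(z,0)=f(z)$ is immediate since $d_k(0)=1$. Uniform convergence of the twice-differentiated series on compact subsets of $D$, guaranteed by $|a_k|r^k$ decay together with $|d_k(t)|\leq 1$, legitimizes bringing $\partial_\varphi^2$ and $\mathfrak D_t^g$ inside the sum.

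For uniqueness I would again use the Laplace transform. Given any solution $u(\cdot,t)\in A(D)$, set $\widehat u(z,\theta)=\int_0^\infty e^{-\theta t} u(z,t)\,dt$; the Cauchy problem \eqref{cprob}--\eqref{cprob1} becomes
\[
g(\theta)\widehat u(z,\theta)-\frac{g(\theta)}{\theta}f(z)=\frac12\,\partial_\varphi^2 \widehat u(z,\theta).
\]
Expanding $\widehat u(z,\theta)=\sum_k \hat c_k(\theta) z^k$ and matching coefficients against $f(z)=\sum_k a_k z^k$ using $\partial_\varphi^2 z^k=-k^2 z^k$ forces $\hat c_k(\theta)=a_k\tilde d_k(\theta)$ uniquely for every $k$, so by injectivity of the Laplace transform $u$ coincides with $W_t^g f$.

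The main technical obstacle throughout is the rigorous justification of all the operator/series interchanges, particularly for $\mathfrak D_t^g$ applied to the sum: one has to control the action of the nonlocal kernel $w$ on $d_k(t)$ uniformly in $k$, which is what the bound $|d_k(t)|\leq 1$ and the explicit Laplace identity are used for. Everything else is essentially bookkeeping.
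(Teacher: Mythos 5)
Your overall strategy --- power-series expansion, computing $d_k(t)$ by conditioning on $\mathcal E_g(t)$, verifying the equation on the Laplace-transform side via $\mathcal{L}\{(\mathfrak{D}_t^g+b\,\partial_t)h;\theta\}=g(\theta)\tilde h(\theta)-\tfrac{g(\theta)}{\theta}h(0)$, and deducing uniqueness from injectivity of the Laplace transform --- is essentially the paper's own proof, and your part (ii) computation (including the derivation of the transform of $\mathfrak{D}_t^g$ from $g(\theta)=b\theta+\theta\tilde w(\theta)$) is correct.

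There is, however, one genuine gap in part (i): the continuity of $W_t^gf$ on $\overline D$. You assert that $f(z)=\sum_k a_kz^k$ converges uniformly on $\overline D$ and that the bound $|d_k(t)|\le 1$ ``transfers continuity on $\overline D$ from $f$ to $W_t^gf$.'' Neither claim is valid for a general $f$ in the disk algebra: the partial-sum operators are not uniformly bounded on $A(D)$, so the Taylor series need not converge uniformly (or even pointwise) on $\partial D$, and a bounded multiplier sequence $(d_k)$ does not in general map $A(D)$ into itself --- since $\sum_k|a_k|$ may diverge, $|d_k(t)|\le1$ gives no uniform control of $\sum_k a_kz^kd_k(t)$ up to the boundary. (Your manipulations are fine for $z\in D$, where $|a_kz^ke^{-iku}|\le\Vert f\Vert\,|z|^k$ is summable; this yields \eqref{gsol} and analyticity in $D$, but not boundary continuity. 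The sequence $(d_k(t))$ is in fact a contractive multiplier of $A(D)$, but only because it is the Fourier coefficient sequence of the probability density $\ell_g(\cdot,t)$ wrapped on the circle --- which is precisely the integral representation you would need to invoke, making the series argument circular.) The paper instead works directly with \eqref{eq:c0tg}: since $|z_ne^{-iu}-z_0e^{-iu}|=|z_n-z_0|$ and $\int_{\mathbb R}\ell_g(u,t)\,du=1$, one gets $|W_t^gf(z_n)-W_t^gf(z_0)|\le\omega_1(f;|z_n-z_0|)_{\overline D}$, and uniform continuity of $f$ on the compact set $\overline D$ finishes the argument; you should replace your multiplier step with this estimate. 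A smaller point: your uniqueness argument tacitly assumes that an arbitrary $A(D)$-valued solution admits a time-Laplace transform (and that $\mathfrak D_t^gu$ is well defined for it); the paper secures this by appealing to Theorem 2.1 of Chen's cited work, and some such a priori input is needed before the coefficient-matching step is legitimate.
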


\begin{proof}
(i) The representation \eqref{gsol} follows by considering that $f(z)\in
A(D) $ and by taking into account \eqref{lg} together with \eqref{eq:lt}.
Indeed, since $f(ze^{-iu})=\sum_{k=0}^{\infty }a_{k}z^{k}e^{-iuk},z\in D,$
is absolutely convergent, we can write
\begin{align*}
W_{t}^{g}f(z)& =\sum_{k=0}^{\infty }a_{k}z^{k} \mathbb E[e^{-i kB(\mathcal E_g(t))}]\\
&=\sum_{k=0}^{\infty }a_{k}z^{k}\int_{\mathbb{R}%
}e^{-iku}\ell _{g}(u,t)\text{d}u \\
& =\sum_{k=0}^{\infty }a_{k}z^{k}\int_{0}^{+\infty }\frac{1}{\sqrt{2\pi y}}%
m_g(y,t)dy \int_{\mathbb{R}}e^{-iku}\
e^{-\frac{u^{2}}{2y}}\text{d}u \\
& =\sum_{k=0}^{\infty }a_{k}z^{k}\int_{0}^{+\infty }e^{-\frac{k^{2}y}{2}}%
m_g(y,t)dy \\
& =\sum_{k=0}^{\infty }a_{k}z^{k}\mathbb{E}(e^{-\frac{k^{2}}{2}\mathcal{E}%
_{g}(t)})
\end{align*}%

By using the same arguments in \cite{gal1}, it is possible to prove that if $%
z_n,z_0\in \overline D,$ with $\lim_{n\to\infty}z_n=z_0,$ we get
\begin{align*}
|W_t^gf(z_n)-W_t^gf(z_0)|&\leq \int_{\mathbb{R}}\omega_1(f;
|z_n-z_0|)_{\overline D}\, \ell_g(u,t)\text{d} u \\
&=\omega_1(f; |z_n-z_0|)_{\overline D},
\end{align*}
where $\omega_1(f; \delta)_{\overline D}:=\sup\{|f(u)-f(v)|; |u-v|\leq
\delta, u,v\in \overline D\}$ is the modulus of continuity of $f.$ Then if $%
f $ is continuous on $\overline D,$ the integral $W_t^gf(z)$ is continuous
on $\overline D,$ as $n\to\infty.$

From \eqref{gsol} and by exploiting \eqref{eq:lt}, we obtain that
\begin{align*}
R_\theta W_{t}^{g}f(z)&=\sum_{k=0}^{\infty }a_{k}z^{k}\int_{0}^{\infty }e^{-\theta t}\mathbb{E}(e^{-\frac{k^{2}}{2}\mathcal{E}%
_{g}(t)})dt\\
&=\sum_{k=0}^{\infty }a_{k}z^{k}\frac{g(\theta )/\theta }{\frac{k^{2}}{2}+g(\theta
)}.
\end{align*}

(ii) From Theorem 2.1 in \cite{chen}, we can observe that: 1) $\mathfrak{D}_{t}^{g}W_{t}^{g}f(z)$ is well-defined since the integral appearing in the definition of $\mathfrak{D}_{t}^{g}$ is absolutely convergent in the Banach space $(A(D),||\cdot||);$ 2) for $b>0,$ $t\mapsto W_t^g f(\cdot)$ is globally Lipschitz in $(A(D),||\cdot||)$ and then $\frac{\partial}{\partial t}W_{t}^{g}f(\cdot)$ exits in $(A(D),||\cdot||)$ for a.s. $t\geq 0.$ In order to prove that \eqref{gsol} coincides with the solution of %
\eqref{cprob} with initial condition \eqref{cprob1} we take the time-Laplace
transform of both sides of \eqref{cprob}. By applying the result (2.18) in \cite{meetoa}, on the Laplace transform of the generalized derivative \eqref{eq:cd}, we obtain
\begin{align}
\mathcal{L}\left\{\mathfrak{D}_{t}^{g}W_{t}^{g}f(z)+b\frac{\partial}{\partial t}W_{t}^{g}f(z);\theta\right\} & =g(\theta
)R_\theta W_{t}^{g}f(z)-\frac{g(\theta )}{%
\theta }f(z)  \notag \\
& =g(\theta )\sum_{k=0}^{\infty }a_{k}z^{k}\tilde{d}_{k}(\theta )-\frac{%
g(\theta )}{\theta }f(z)  \notag \\
& =\frac{g(\theta )}{\theta }\bigg[\sum_{k=0}^{\infty }\frac{%
a_{k}z^{k}g(\theta )}{g(\theta )+\frac{k^{2}}{2}}-f(z)\bigg]  \label{a}
\end{align}%
and
\vskip -10pt 
\begin{align}
\frac{1}{2}\frac{\partial ^{2}}{\partial \varphi ^{2}}\mathcal{L}%
\{W_{t}^{g}f(z);\theta \}& =\frac{1}{2}\frac{\partial ^{2}}{\partial
\varphi ^{2}}\sum_{k=0}^{\infty }a_{k}z^{k}\tilde{d}_{k}(\theta )  \notag \\
& =-\frac{1}{2}\frac{g(\theta )}{\theta} %
\sum_{k=0}^{\infty }\frac{a_{k}k^{2}z^{k}}{\frac{k^{2}}{2}+g(\theta )}.  \label{b}
\end{align}%
Furthermore, from \eqref{gsol}, it is easy to prove that
\begin{align}\label{c}
W_{0}^{g}f(z)=f(z).
\end{align}%
The result follows by considering \eqref{a}, \eqref{b} and \eqref{c} together and
taking into account that $f(z)=\sum_{k=0}^{\infty }a_{k}z^{k}$.
\end{proof}

\begin{remark}
In Theorem \ref{teofc}, we proved that $u(t,z)=W_t^g f(z)$ is the classical solution with $u\in C^{\infty}([0,\infty); A(D)).$ Actually, it is also possible to prove that $u(t,z)$ is the unique strong solution of the fractional Cauchy problem \eqref{cprob}-\eqref{cprob1} (see Theorem 2.1 in \cite{chen} for the exact statement).
\end{remark}

\section{Time-changed wrapped Brownian motion}\label{sec4}

\setcounter{section}{4} \setcounter{equation}{0} \setcounter{theorem}{0} 

For simplicity and without loss of generality, we assume
throughout this section that $z=1$ and we consider the definition
$\mathfrak B_g(t):=e^{iB(\mathcal{E}_{g}(t))},\, t\geq 0$, which
is equivalent to (\ref{eq:sucb}). Then, by exploiting the theory
of wrapped distribution, for any $t>0$ we can write down its
probability density
$$\mu_{\mathfrak B_g}(\varphi,t):=\frac{\mathbb P(\B_g(t)\in d\varphi)}{d\varphi}=\sum_{k=-\infty}^\infty \ell_g(\varphi+2k\pi,t),\quad \varphi\in [0,2\pi).$$
The probability distribution on a circle is characterized by its
Fourier coefficients (see \cite{feller}, Theorem XIX 6.1). In our
case, for $k\in \mathbb N,$ we have that
\begin{align*}
\phi_k^t&:=\int_0^{2\pi}e^{ik \varphi}\mu_{\mathfrak B_g}(\varphi,t)d\varphi=\sum_{k=-\infty}^\infty \int_{2k\pi}^{2\pi(k+1)} e^{ik \varphi} \ell_g(\varphi, t)d\varphi\\
&=\mathbb E[e^{ikB( \mathcal{E}_g(t))}]=\mathbb E[e^{-\frac{ k^2\mathcal{E}_g(t)}{2}}]=d_k(t).
\end{align*}
Therefore,
\vskip -10pt 
$$\mu_{\mathfrak B_g}(\varphi)=\frac{1}{2\pi}\sum_{k=-\infty}^\infty \phi_k^t e^{-ik \varphi}
=\frac{1}{2\pi}\left (1+2\sum_{k=1}^\infty d_k(t)\cos(k\varphi)\right).$$

We now evaluate the Laplace transform of the first moments of
$\mathfrak{B}_{g}(t)$ by applying the results on the joint moments
of the inverse subordinators given in \cite{veill}. In particular,
we recall
that, for the Laplace transform of $K_{t_{1},...,t_{n}}(s_{1},...,s_{n}):=\mathbb{P}%
\left(
\mathcal{E}_{g}(t_{1})>s_{1},...,\mathcal{E}_{g}(t_{n})>s_{n}\right)
$, the following formula holds%
\begin{align} \label{veill}
\widetilde{K}_{\theta _{1},...,\theta
_{n}}(s_{1},...,s_{n})&:=\int_{0}^{\infty }...\int_{0}^{\infty
}e^{-\theta _{1}t_{1}...-\theta
_{n}t_{n}}K_{t_{1},...,t_{n}}(s_{1},...,s_{n})dt_{1}...dt_{n} \notag \\
&=\frac{1}{\prod\limits_{j=1}^{n}\theta _{j}}\exp \left\{
-\sum_{j=1}^{n}g\left( \sum_{k=j}^{n}\theta _{j(n)}\right) \left(
s_{j(i)}-s_{j(i-1)}\right) \right\} ,
\end{align}%
where $\theta_1,...,\theta_n>0,$   $0=s_{j(0)}\leq s_{j(1)}\leq ...\leq s_{j(n)}$ and
$j(1),...,j(n)$ is a permutation of the integers $1,...,n$ (by
convention $j(0)=0$).

\begin{theorem}
The Laplace transform of the moments of the process
$\mathfrak{B}_{g}$ is equal to
\begin{equation}
\mathcal{L}\left\{ \mathbb{E}\left[ \mathfrak{B}_{g}(t)\right]
^{r};\theta \right\} =\frac{2g(\theta )}{\theta (r+2g(\theta
))},\qquad \theta
>0,\;t\geq 0,\;r \in \mathbb{N}   \label{ee}
\end{equation}%
and%
\begin{align}\label{cov}
&\mathcal{L}\left\{
\mathbb{E}(\mathfrak{B}_{g}(t_{1})\mathfrak{B}_{g}(t_{2}));\theta
_{1},\theta _{2}\right\}\\
& =\frac{4g(\theta _{1})g(\theta
_{2})[g(\theta _{1}+\theta _{2})+2]+3[g(\theta _{1})+g(\theta
_{2})-g(\theta
_{1}+\theta _{2})]}{\theta _{1}\theta _{2}\left[ 2+g(\theta _{1}+\theta _{2})%
\right] (3+2g(\theta _{1}))(1+2g(\theta _{2}))},  \notag
\end{align}%
for \, $0\leq t_{1}<t_{2}$ and $\theta _{1},\theta _{2}>0.$
\end{theorem}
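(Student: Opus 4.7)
For \eqref{ee}, the plan is to write $\mathfrak{B}_g(t)^r = e^{irB(\mathcal{E}_g(t))}$, condition on $\mathcal{E}_g(t) = s$, and use the Gaussian characteristic function $\mathbb{E}[e^{irB(s)}] = e^{-r^2 s/2}$ to get $\mathbb{E}[\mathfrak{B}_g(t)^r] = \mathbb{E}[e^{-r^2\mathcal{E}_g(t)/2}] = d_r(t)$, the coefficient already computed in Theorem \ref{teofc}(i). Taking the $t$-Laplace transform, swapping the order of integration by Tonelli, and applying \eqref{eq:lt} then gives
\[
\int_0^\infty e^{-\theta t}\mathbb{E}[e^{-r^2\mathcal{E}_g(t)/2}]\,dt = \frac{g(\theta)}{\theta}\int_0^\infty e^{-s(r^2/2+g(\theta))}\,ds = \frac{2g(\theta)}{\theta(r^2+2g(\theta))},
\]
which is \eqref{ee}.

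For \eqref{cov}, the same unwrapping produces $\mathfrak{B}_g(t_1)\mathfrak{B}_g(t_2) = e^{i[B(\mathcal{E}_g(t_1))+B(\mathcal{E}_g(t_2))]}$. With $t_1 < t_2$, monotonicity of the inverse subordinator gives $s_1 := \mathcal{E}_g(t_1) \le s_2 := \mathcal{E}_g(t_2)$ a.s., and the decomposition $B(s_1)+B(s_2) = 2B(s_1) + [B(s_2)-B(s_1)]$ into independent centered Gaussians with variances $4s_1$ and $s_2-s_1$ yields total conditional variance $3s_1+s_2$; hence
\[
\mathbb{E}[\mathfrak{B}_g(t_1)\mathfrak{B}_g(t_2)] = \mathbb{E}\bigl[e^{-(3\mathcal{E}_g(t_1)+\mathcal{E}_g(t_2))/2}\bigr].
\]
The task thus reduces to evaluating the double Laplace transform of $\mathbb{E}[e^{-\lambda_1\mathcal{E}_g(t_1)-\lambda_2\mathcal{E}_g(t_2)}]$ in $(\theta_1,\theta_2)$ at $(\lambda_1,\lambda_2) = (3/2,1/2)$.

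For this I invoke \eqref{veill}. A double integration by parts in $(s_1,s_2)$ expresses the Laplace transform of the joint density of $(\mathcal{E}_g(t_1),\mathcal{E}_g(t_2))$ as $\lambda_1\lambda_2 \iint e^{-\lambda_1 s_1 - \lambda_2 s_2}\widetilde{K}_{\theta_1,\theta_2}(s_1,s_2)\,ds_1 ds_2$ plus boundary contributions involving $\widetilde{K}_{\theta_1,\theta_2}(s_i,0)$, which reduce via \eqref{eq:lt} to the univariate expressions $e^{-s_i g(\theta_i)}/(\theta_1\theta_2)$. The bulk double integral is evaluated by splitting the $(s_1,s_2)$-plane at $s_1 = s_2$ and applying \eqref{veill} separately in each region (substituting $s_2 = s_1+u$ in one piece and $s_1 = s_2+v$ in the other), yielding elementary exponential integrals sharing the common factor $[\lambda_1+\lambda_2+g(\theta_1+\theta_2)]^{-1}$. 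Plugging in $(\lambda_1,\lambda_2) = (3/2,1/2)$ and rationalizing over the common denominator $\theta_1\theta_2(3+2g(\theta_1))(1+2g(\theta_2))(2+g(\theta_1+\theta_2))$ produces \eqref{cov}.

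The main obstacle is the algebraic bookkeeping at the end: the four contributions (two boundary terms and the two pieces of the split bulk integral) must combine exactly to produce the peculiar numerator $4g(\theta_1)g(\theta_2)[g(\theta_1+\theta_2)+2]+3[g(\theta_1)+g(\theta_2)-g(\theta_1+\theta_2)]$, which requires careful tracking of signs in the integration-by-parts step and the correct use of \eqref{veill} in each of the two orderings of $(s_1,s_2)$.
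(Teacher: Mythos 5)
Your argument follows essentially the same route as the paper for both formulas: for the moments, conditioning on $\mathcal{E}_g$, using the Gaussian characteristic function and then the Laplace transform \eqref{eq:lt}; for the mixed moment, reducing to $\mathbb{E}\bigl[e^{-(3\mathcal{E}_g(t_1)+\mathcal{E}_g(t_2))/2}\bigr]$ (the paper writes the conditional variance as $\mathcal{E}_g(t_1)+\mathcal{E}_g(t_2)+2\min\{\mathcal{E}_g(t_1),\mathcal{E}_g(t_2)\}$, which is the same quantity), then double integration by parts in $(s_1,s_2)$ and the Veillette--Taqqu formula \eqref{veill} applied separately on the two orderings of $(s_1,s_2)$, finally specializing to $(\eta_1,\eta_2)=(3/2,1/2)$. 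That is exactly the paper's proof of \eqref{cov}, and the algebra does close up to the stated numerator.

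One substantive point on \eqref{ee}: your (correct) computation gives $\mathbb{E}[\mathfrak{B}_g(t)^r]=\mathbb{E}[e^{-r^2\mathcal{E}_g(t)/2}]$ and hence the Laplace transform $\frac{2g(\theta)}{\theta(r^2+2g(\theta))}$, whereas the printed formula has $r+2g(\theta)$ in the denominator. The paper's own proof reaches the printed version by writing the conditional expectation as $e^{-\frac{r}{2}\mathcal{E}_g(t)}$, i.e.\ it drops the square on $r$ in the Gaussian characteristic function. Your version is the one consistent with the rest of the paper: with $\tilde d_k(\theta)$ in Theorem \ref{teofc}, with the Fourier coefficients $\phi_k^t=d_k(t)=\mathbb{E}[e^{-k^2\mathcal{E}_g(t)/2}]$ computed at the start of Section \ref{sec4}, and with \eqref{cov} itself (letting $t_1=t_2$ there yields $\mathbb{E}[e^{-2\mathcal{E}_g(t)}]$, which is the case $r=2$ of your formula, not of the printed \eqref{ee}). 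So do not force your derivation to match the statement; flag the $r$ versus $r^2$ discrepancy as an apparent typo propagated from the proof.
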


\begin{proof}
The $r$-th moment in \eqref{ee} can be easily obtained by a
conditioning
argument and by considering \eqref{be}:%
\begin{align*}
V(t)& :=\mathbb{E}\left[ e^{iB(\mathcal{E}_{g}(t))}\right] ^{r}=\mathbb{E}%
\left[ \mathbb{E}\left[ \left.
e^{irB(\mathcal{E}_{g}(t))}\right\vert
\mathcal{E}_{g}(t)\right] \right]  \\
&=\mathbb{E}e^{-\frac{r}{2}\mathcal{E}_{g}(t)}=\int_{0}^{+\infty }e^{-\frac{%
rs}{2}}m_{g}(s,t)ds . 
\end{align*}%
Then, by taking the time-Laplace transform and considering
\eqref{lt}, we
have that, for $\theta >0,$%
\begin{equation*}
\widetilde{V}(\theta )=\int_{0}^{+\infty }e^{-\theta
t}V(t)dt=\int_{0}^{+\infty }e^{-\frac{rs}{2}}\widetilde{m}_{g}(s,\theta )ds=%
\frac{g(\theta )}{\theta (r/2+g(\theta ))},
\end{equation*}%
where $\widetilde{m}_{g}(s,\theta ):=\int_{0}^{+\infty }e^{-\theta
t}m_{g}(s,t)dt$. In order to prove formula \eqref{cov}, we write
\begin{align}
V(t_{1},t_{2})&:=\mathbb{E}\left[
\mathfrak{B}_{g}(t_{1})\mathfrak{B}_{g}(t_{2})\right]   \label{ll} \\
&=\mathbb{E}e^{iB(\mathcal{E}_{g}(t_{1}))+iB(\mathcal{E}_{g}(t_{2}))}=%
\mathbb{E}\left[ \mathbb{E}\left[ \left. e^{iB(\mathcal{E}_{g}(t_{1}))+iB(%
\mathcal{E}_{g}(t_{2}))}\right\vert \mathcal{E}_{g}(t_{1}),\mathcal{E}%
_{g}(t_{1})\right] \right]   \notag \\
&=\mathbb{E}e^{-\frac{1}{2}[\mathcal{E}_{g}(t_{1})+\mathcal{E}%
_{g}(t_{2})+2\min
\{\mathcal{E}_{g}(t_{1}),\mathcal{E}_{g}(t_{2})\}]}. \notag
\end{align}%
We start by evaluating, for any $\eta _{1},\eta _{2}>0$
\begin{align*}
V_{\eta _{1},\eta _{2}}(t_{1},t_{2})&=\mathbb{E}e^{-\eta _{1}\mathcal{E}_{g}(t_{1})-\eta _{2}\mathcal{E}%
_{g}(t_{2})} \\
&=\int_{0}^{+\infty }\int_{0}^{+\infty }e^{-\eta
_{1}s_{1}-\eta
_{2}s_{2}}\frac{\partial ^{2}}{\partial s_{1}\partial s_{2}}%
K_{t_{1},t_{2}}(s_{1},s_{2})ds_{1}ds_{2} \\
&=[\text{by repeatedly integrating by parts}] \\
&=K_{t_{1},t_{2}}(0,0)-\eta _{1}\int_{0}^{+\infty }e^{-\eta
_{1}s_{1}}K_{t_{1},t_{2}}(s_{1},0)ds_{1}\\
&\quad-\eta
_{2}\int_{0}^{+\infty
}e^{-\eta _{2}s_{2}}K_{t_{1},t_{2}}(0,s_{2})ds_{2}+ \\
&\quad+\eta _{1}\eta _{2}\int_{0}^{+\infty }\int_{0}^{+\infty
}e^{-\eta _{1}s_{1}-\eta
_{2}s_{2}}K_{t_{1},t_{2}}(s_{1},s_{2})ds_{1}ds_{2}.
\end{align*}%
We now consider that $K_{t_{1},t_{2}}(0,0)=\mathbb{P}\left( \mathcal{E}%
_{g}(t_{1})>0,\mathcal{E}_{g}(t_{2})>0\right) =1$ and that $%
K_{t_{1},t_{2}}(s_{1},0)=\mathbb{P}\left( \mathcal{E}_{g}(t_{1})>s_{1}%
\right) $, so that we can write%
\begin{eqnarray*}
\int_{0}^{+\infty }e^{-\eta s_{1}}K_{t_{1},t_{2}}(s_{1},0)ds_{1}
&=&\int_{0}^{+\infty }e^{-\eta s_{1}}\mathbb{P}\left( \mathcal{E}%
_{g}(t_{1})>s_{1}\right) ds_{1} \\
&=&\frac{1}{\eta }\left[ 1-\widetilde{m}_{g}(\eta ,t_{1})\right] ,
\end{eqnarray*}%
and analogously for $K_{t_{1},t_{2}}(0,s_{2}).$ Therefore, we get%
\begin{align}
V_{\eta _{1},\eta _{2}}(t_{1},t_{2})&=\widetilde{m}_{g}\left( \eta
_{1},t_{1}\right) +\widetilde{m}_{g}\left( \eta _{2},t_{2}\right)
-1\notag\\
&\quad+\eta _{1}\eta _{2}\int_{0}^{+\infty }\int_{0}^{+\infty
}e^{-\eta _{1}s_{1}-\eta
_{2}s_{2}}K_{t_{1},t_{2}}(s_{1},s_{2})ds_{1}ds_{2}. \label{lt}
\end{align}%
In order to apply \eqref{veill}, we evaluate the Laplace transform of %
\eqref{lt}, with respect to the time variables:%
\begin{align}
\widetilde{V}_{\eta _{1},\eta _{2}}(\theta _{1},\theta _{2})
&:=\int_{0}^{+\infty }\int_{0}^{+\infty }e^{-\theta
_{1}t_{1}-\theta
_{2}t_{2}}V_{\eta _{1},\eta _{2}}(t_{1},t_{2})dt_{1}dt_{2}  \label{lt2} \\
&=\frac{1}{\theta _{2}}\widetilde{\widetilde{m}}_{g}\left( \eta
_{1},\theta _{1}\right) +\frac{1}{\theta
_{1}}\widetilde{\widetilde{m}}_{g}\left( \eta
_{2},\theta _{2}\right) -\frac{1}{\theta _{1}\theta _{2}} 
\notag \\
&\quad+\eta _{1}\eta _{2}\int_{0}^{+\infty }\int_{0}^{+\infty
}e^{-\eta _{1}s_{1}-\eta _{2}s_{2}}\widetilde{K}_{\theta
_{1},\theta _{2}}(s_{1},s_{2})ds_{1}ds_{2},  \notag
\end{align}%
where $\widetilde{\widetilde{m}}_{g}\left( \eta ,\theta \right)
:=\int_{0}^{+\infty }e^{-\eta s}\widetilde{m}_{g}(s,\theta )ds.$
On the
other hand, we can rewrite the last integral in \eqref{lt2}, by applying %
\eqref{veill}, as follows%
\begin{eqnarray*}
&&\frac{1}{\theta _{1}\theta _{2}}\int_{0}^{+\infty
}\int_{s_{1}}^{+\infty }e^{-\eta _{1}s_{1}-\eta
_{2}s_{2}}e^{-s_{1}[g(\theta _{1}+\theta
_{2})-g(\theta _{2})]-s_{2}g(\theta _{2})}ds_{1}ds_{2} 
\\
&&+\frac{1}{\theta _{1}\theta _{2}}\int_{0}^{+\infty
}\int_{0}^{s_{1}}e^{-\eta _{1}s_{1}-\eta
_{2}s_{2}}e^{-s_{2}[g(\theta
_{1}+\theta _{2})-g(\theta _{1})]-s_{1}g(\theta _{1})}ds_{1}ds_{2} \\
&=&\frac{1}{\theta _{1}\theta _{2}}\int_{0}^{+\infty
}\int_{s_{1}}^{+\infty }e^{-[\eta _{1}+g(\theta _{1}+\theta
_{2})-g(\theta _{2})]s_{1}-[\eta
_{2}+g(\theta _{2})]s_{2}}ds_{1}ds_{2} 
\\
&&+\frac{1}{\theta _{1}\theta _{2}}\int_{0}^{+\infty
}\int_{s_{2}}^{+\infty }e^{-[\eta _{1}+g(\theta _{1})]s_{1}-[\eta
_{2}+g(\theta _{1}+\theta
_{2})-g(\theta _{1})]s_{2}}ds_{1}ds_{2} \\
&=&\frac{1}{\theta _{1}\theta _{2}}
\left[ \int_{0}^{+\infty} \! 
\int_{s_{1}}^{+\infty} e^{-s_{1}A_{1}-s_{2}B_{2}}ds_{1}ds_{2} + \! 
\int_{0}^{+\infty} \! 
\int_{s_{2}}^{+\infty }e^{-s_{1}B_{1}-s_{2}A_{2}}ds_{1}ds_{2}\right]  \\
&=&\frac{1}{\theta _{1}\theta _{2}}\left[ \frac{1}{B_{2}(A_{1}+B_{2})}
 +\frac{1}{B_{1}(A_{2}+B_{1})}\right] ,
\end{eqnarray*}%
where we put $A_{i}:=\eta _{i}+g(\theta _{1}+\theta _{2})-g(\theta
_{j}),$ for $i,j=1,2$ and $i\neq j$, $B_{i}:=\eta _{i}+g(\theta
_{i}),$ for $i=1,2.$ We consider that
$A_{1}+B_{2}=A_{2}+B_{1}=\eta _{1}+\eta _{2}+g(\theta
_{1}+\theta _{2}),$ so that we can write \eqref{lt2}, by recalling \eqref{lt}%
, as%
\begin{align}
\widetilde{V}_{\eta _{1},\eta _{2}}(\theta _{1},\theta _{2})  \label{ut} &=\frac{1}{\theta _{1}\theta _{2}}\left[ \frac{g(\theta
_{1})}{\eta _{1}+g(\theta _{1})}+\frac{g(\theta _{2})}{\eta
_{2}+g(\theta _{2})}-1\right]
  \\
&\quad+\frac{\eta _{1}\eta _{2}}{\theta _{1}\theta _{2}}\frac{\eta
_{1}+\eta _{2}+g(\theta _{1})+g(\theta _{2})}{\left[ \eta
_{1}+\eta _{2}+g(\theta _{1}+\theta _{2})\right] \left[ \eta
_{1}+g(\theta _{1})\right] \left[ \eta _{2}+g(\theta _{2})\right]
}.  \notag
\end{align}%
By taking into account that $\mathcal{H}_{g}$ is a.s. increasing
and its
inverse $\mathcal{E}_{g}$ is a.s. non-decreasing$,$ so that
 $\min \{\mathcal{E}_{g}(t_{1}),\mathcal{E}_{g}(t_{2})\}=\mathcal{E}_{g}(t_{1})$
 a.s., for $t_{2}>t_{1}$, formula \eqref{cov} follows from \eqref{ut}, with
$\eta _{1}=3/2$ and $\eta _{2}=1/2$, after some algebraic  calculations.
\end{proof}

We are also able to give an integral representation for the mixed
moment $\mathbb E[\mathfrak B_g(t)\overline{\mathfrak B_g(s)}]$,
for $t,s\geq 0$.

\begin{theorem}
Let $U_{g}(\tau ):=\mathbb{E}[\mathcal{E}_{g}(\tau )]$, $\tau \geq
0$ and $\mathbb E[(\mathcal{E}_{g}(t))^{k}]<\infty, \, k\in \mathbb N$, 
then
\begin{equation}
\mathbb{E}[\mathfrak{B}_{g}(t)\overline{\mathfrak{B}_{g}(s)}]=\mathbb{E}%
\mathfrak{B}_{g}(t\vee s)+\frac{1}{2}\int_{0}^{t\wedge s}\mathbb{E}\mathfrak{%
B}_{g}(t\vee s-\tau )dU_{g}(\tau )  \label{bb}
\end{equation}%
for \, $t,s\geq 0$ and $t\neq s$, while $\mathbb{E}[\mathfrak{B}_{g}(t)%
\overline{\mathfrak{B}_{g}(t)}]=1$.
\end{theorem}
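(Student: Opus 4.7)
For $t=s$ the statement is trivial since $\mathfrak{B}_g(t)\overline{\mathfrak{B}_g(t)}=|e^{iB(\mathcal{E}_g(t))}|^{2}\equiv 1$. By the symmetry of the left-hand side of \eqref{bb} under the interchange of $t$ and $s$ (both sides being real-valued), it suffices to treat the case $0\leq s<t$. Conditioning on $(\mathcal{E}_g(s),\mathcal{E}_g(t))$ and using the independence of $B$ from $\mathcal{E}_g$ together with the a.s.\ monotonicity $\mathcal{E}_g(t)\geq \mathcal{E}_g(s)$, one obtains
\[
\mathbb{E}[\mathfrak{B}_g(t)\overline{\mathfrak{B}_g(s)}]=\mathbb{E}[e^{-\frac{1}{2}(\mathcal{E}_g(t)-\mathcal{E}_g(s))}].
\]

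Next I would expand the integrand via the fundamental theorem of calculus,
\[
e^{-\frac{1}{2}(\mathcal{E}_g(t)-\mathcal{E}_g(s))}=e^{-\frac{1}{2}\mathcal{E}_g(t)}+\frac{1}{2}\int_{0}^{\mathcal{E}_g(s)}e^{-\frac{1}{2}(\mathcal{E}_g(t)-u)}\,du,
\]
so that taking expectations the first summand produces $\mathbb{E}\mathfrak{B}_g(t)=\mathbb{E}\mathfrak{B}_g(t\vee s)$. The theorem will then follow from the identity
\[
\mathbb{E}\int_{0}^{\mathcal{E}_g(s)}e^{-\frac{1}{2}(\mathcal{E}_g(t)-u)}\,du=\int_{0}^{s}\mathbb{E}\mathfrak{B}_g(t-\tau)\,dU_g(\tau).
\]

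To establish this identity I would apply Tonelli (legitimate since the integrand is bounded by $1$ and $\mathbb{E}\mathcal{E}_g(s)=U_g(s)<\infty$) together with the a.s.\ identification $\{u<\mathcal{E}_g(s)\}=\{\mathcal{H}_g(u)\leq s\}$ to rewrite the left-hand side as
\[
\int_{0}^{\infty}e^{u/2}\,\mathbb{E}\bigl[\mathbf{1}_{\mathcal{H}_g(u)\leq s}\,e^{-\frac{1}{2}\mathcal{E}_g(t)}\bigr]\,du.
\]
The crucial probabilistic step is the strong Markov property of $\mathcal{H}_g$ at the deterministic instant $u$: conditional on $\mathcal{H}_g(u)=h$ with $h\leq s<t$, the shifted subordinator $(\mathcal{H}_g(u+\cdot)-h)$ is an independent copy of $\mathcal{H}_g$, hence $\mathcal{E}_g(t)=u+\mathcal{E}_g'(t-h)$ for an independent inverse subordinator $\mathcal{E}_g'$. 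Plugging this in yields
\[
\mathbb{E}\bigl[\mathbf{1}_{\mathcal{H}_g(u)\leq s}\,e^{-\frac{1}{2}\mathcal{E}_g(t)}\bigr]=e^{-u/2}\int_{0}^{s}\mathbb{E}\mathfrak{B}_g(t-h)\,h_g(h,u)\,dh,
\]
where $h_g(\cdot,u)$ denotes the density of $\mathcal{H}_g(u)$. The $e^{\pm u/2}$ factors then cancel, and a further Tonelli exchange combined with the potential-measure identity $dU_g(h)=\bigl[\int_0^{\infty}h_g(h,u)\,du\bigr]\,dh$ (which is just $U_g(s)=\mathbb{E}\mathcal{E}_g(s)=\int_0^s\int_0^{\infty}h_g(h,u)\,du\,dh$ in differential form) collapses the $u$-integral to exactly $\int_{0}^{s}\mathbb{E}\mathfrak{B}_g(t-h)\,dU_g(h)$.

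The main obstacle will be justifying the Markov split: the a.s.\ equality $\{u<\mathcal{E}_g(s)\}=\{\mathcal{H}_g(u)\leq s\}$ relies on the strict monotonicity of $\mathcal{H}_g$ and the absolute continuity of its one-dimensional marginals, both guaranteed by the standing hypotheses ($a=0$ and $w$ infinite at the origin and absolutely continuous on $(0,+\infty)$). Once the Markov decomposition and the potential-measure identity are in hand, all remaining manipulations are routine Tonelli interchanges.
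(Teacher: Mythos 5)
Your proof is correct, and it takes a genuinely different route from the paper's. Both arguments begin by conditioning to get $\mathbb{E}[\mathfrak{B}_g(t)\overline{\mathfrak{B}_g(s)}]=\mathbb{E}[e^{-\frac{1}{2}(\mathcal{E}_g(t)-\mathcal{E}_g(s))}]$ for $s<t$, but from there the paper expands the exponential in a power series, applies the binomial theorem to $(\mathcal{E}_g(t)-\mathcal{E}_g(s))^k$, and invokes the recursive representation of the joint moments $\mathbb{E}[(\mathcal{E}_g(s))^{k}(\mathcal{E}_g(t))^{j}]$ from Theorem 4.2 of \cite{veill} (itself an integral recursion against $dU_g$); after reindexing, the two mixed-moment integrals $I_1$ and the first piece of $I_2$ cancel and the surviving sums recollapse into $\mathbb{E}\mathfrak{B}_g(t)+\frac{1}{2}\int_0^s\mathbb{E}\mathfrak{B}_g(t-\tau)\,dU_g(\tau)$ --- this is precisely why the hypothesis $\mathbb{E}[(\mathcal{E}_g(t))^{k}]<\infty$ for all $k$ appears in the statement. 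You replace all of this with the elementary decomposition $e^{-\frac{1}{2}(a-b)}=e^{-\frac{1}{2}a}+\frac{1}{2}\int_0^b e^{-\frac{1}{2}(a-u)}\,du$, the a.s. equivalence $\{u<\mathcal{E}_g(s)\}=\{\mathcal{H}_g(u)\le s\}$, the Markov property of the subordinator at the deterministic instant $u$, and the identification of $dU_g$ with the potential measure of $\mathcal{H}_g$. What this buys: the argument is self-contained (no appeal to the external moment recursion), it only needs $\mathbb{E}\mathcal{E}_g(s)<\infty$ rather than finiteness of all moments, and it makes the renewal-theoretic meaning of the $dU_g$ integral transparent. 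One cosmetic caveat: you write a density $h_g(\cdot,u)$ for $\mathcal{H}_g(u)$, which the standing assumptions (infinite L\'evy measure) guarantee to be atomless but not necessarily absolutely continuous; replacing $h_g(h,u)\,dh$ by the law $\mathbb{P}(\mathcal{H}_g(u)\in dh)$ throughout, and using the potential-measure identity in the form $U_g(s)=\mathbb{E}\mathcal{E}_g(s)=\int_0^\infty\mathbb{P}(\mathcal{H}_g(u)\le s)\,du$, closes the argument without change.
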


\begin{proof}
Let $t>s$ and  denoting $%
U_{g}(s,t;k,j):=\mathbb{E}[(\mathcal{E}_{g}(s))^{k}(\mathcal{E}_{g}(t))^{j}]$, $%
s,t\geq 0$, $k,j\in \mathbb{N},$ we can write down
\begin{align}
\mathbb{E}[\mathfrak{B}_{g}(t)\overline{\mathfrak{B}_{g}(s)}]& =\mathbb{E}%
[e^{i(B(\mathcal{E}_{g}(t))-B(\mathcal{E}_{g}(s)))}]  \label{eq: mixmom} \\
& =\mathbb{E}[[e^{i(B(\mathcal{E}_{g}(t))-B(\mathcal{E}_{g}(s)))}|\mathcal{E}%
_{g}(t),\mathcal{E}_{g}(s)]]  \notag \\
&
=\mathbb{E}[e^{-\frac{1}{2}(\mathcal{E}_{g}(t)-\mathcal{E}_{g}(s))}]
\notag \\
& =\sum_{k=0}^{\infty }\frac{1}{k!}\left( -\frac{1}{2}\right) ^{k}\mathbb{E}%
[(\mathcal{E}_{g}(t)-\mathcal{E}_{g}(s))^{k}]  \notag \\
& =\sum_{k=0}^{\infty }\frac{1}{k!}\left( -\frac{1}{2}\right)
^{k}\sum_{j=0}^{k}(-1)^{k-j}\binom{k}{j}\mathbb{E}\left[ (\mathcal{E}%
_{g}(t))^{j}(\mathcal{E}_{g}(s))^{k-j}\right]   \notag \\
& =\sum_{k=1}^{\infty }\frac{1}{k!}\left( -\frac{1}{2}\right)
^{k}\sum_{j=0}^{k-1}(-1)^{k-j}\binom{k}{j}U_{g}(s,t;k-j,j)\notag\\
&\quad+\sum_{k=0}^{%
\infty }\frac{1}{k!}\left( -\frac{1}{2}\right) ^{k}\mathbb{E}\left[ (%
\mathcal{E}_{g}(t))^{k}\right]   \notag \\
& =\sum_{k=1}^{\infty }\frac{1}{k!}\left( -\frac{1}{2}\right)
^{k}\sum_{j=0}^{k-1}(-1)^{k-j}\binom{k}{j}U_{g}(s,t;k-j,j)+\mathbb{E}e^{-%
\frac{1}{2}\mathcal{E}_{g}(t)},  \notag
\end{align}%
where we have singled out the term $j=k$ in the second summation,
since it must be treated separately (in view of its different
behavior for $s=0$). Now we use the recursive representation of
the moments given by Theorem 4.2,
\cite{veill}, so that (\ref{eq: mixmom}) can be rewritten as follows%
\begin{align}\label{ii}
\mathbb{E}[\mathfrak{B}_{g}(t)\overline{\mathfrak{B}_{g}(s)}]&=\sum_{k=1}^{\infty }\frac{1}{k!}\left( -\frac{1}{2}\right)
^{k}\sum_{j=0}^{k-1}(-1)^{k-j}\binom{k}{j}\\
&\quad \times\int_{0}^{s}(k-j)U_{g}(s-\tau
,t-\tau ;k-j-1,j)dU_{g}(\tau )\notag\\
&\quad+\sum_{k=1}^{\infty }\frac{1}{k!}\left( -\frac{1}{2}\right)
^{k}\sum_{j=0}^{k-1}(-1)^{k-j}\binom{k}{j}\notag\\
&\quad\times\int_{0}^{s}jU_{g}(s-\tau
,t-\tau
;k-j,j-1)dU_{g}(\tau )+\mathbb{E}\mathfrak{B}_{g}(t)  \notag \\
&=:I_{1}+I_{2}+\mathbb{E}\mathfrak{B}_{g}(t) . 
 \notag
\end{align}%
The first term in (\ref{ii}) can be treated as follows%
\begin{align*}
I_{1} &=\sum_{k=1}^{\infty }\frac{1}{k!}\left(
-\frac{1}{2}\right)
^{k}\sum_{j=0}^{k-1}(-1)^{k-j}\binom{k}{j}\\
&\quad\times\int_{0}^{s}(k-j)U_{g}(s-\tau
,t-\tau ;k-j-1,j)dU_{g}(\tau ) \notag \\
&=\sum_{k=1}^{\infty }\frac{1}{(k-1)!}\left( -\frac{1}{2}\right)
^{k}\sum_{j=0}^{k-1}(-1)^{k-j}\binom{k-1}{j}\\
&\quad\times\int_{0}^{s}\mathbb{E}\left[ (%
\mathcal{E}_{g}(t-\tau ))^{j}(\mathcal{E}_{g}(s-\tau
))^{k-1-j}\right]
dU_{g}(\tau )   \notag \\
&=\frac{1}{2}\sum_{l=0}^{\infty }\frac{1}{l!}\left(
-\frac{1}{2}\right)^{l} \sum_{j=0}^{l}(-1)^{l-j}
\binom{l}{j} \int_{0}^{s}\! 
\mathbb{E}\left[ (%
\mathcal{E}_{g}(t\!-\!\tau ))^{j}(\mathcal{E}_{g}(s\!-\!\tau))^{l-j}\right]
dU_{g}(\tau )  \notag \\
&=\frac{1}{2}\int_{0}^{s}\mathbb{E}\left[ \mathfrak{B}_{g}(t-\tau )%
\overline{\mathfrak{B}_{g}(s-\tau )}\right] dU_{g}(\tau ),  \notag
\end{align*}%
while the second one reads%
\begin{align*}
I_{2} &=\sum_{k=1}^{\infty }\frac{1}{k!}\left(
-\frac{1}{2}\right)
^{k}\sum_{j=0}^{k-1}(-1)^{k-j}\binom{k}{j}\\
&\quad\times\int_{0}^{s}jU_{g}(s-\tau
,t-\tau
;k-j,j-1)dU_{g}(\tau )  \notag \\
&=\sum_{k=2}^{\infty }\frac{1}{(k-1)!}\left( -\frac{1}{2}\right)
^{k}\sum_{j=1}^{k-1}(-1)^{k-j}\binom{k-1}{j-1}\\
&\quad\times\int_{0}^{s}\mathbb{E}\left[ (%
\mathcal{E}_{g}(t-\tau ))^{j-1}(\mathcal{E}_{g}(s-\tau
))^{k-j}\right]
dU_{g}(\tau )  \notag \\
&=\sum_{k=2}^{\infty }\frac{1}{(k-1)!}\left( -\frac{1}{2}\right)
^{k}\sum_{m=0}^{k-2}(-1)^{k-m-1}\binom{k-1}{m}\\
&\quad\times\int_{0}^{s}\mathbb{E}
 \left[ (\mathcal{E}_{g}(t-\tau ))^{m}(\mathcal{E}_{g}(s-\tau))^{k-m-1}\right]
dU_{g}(\tau )  \notag \\
&=-\frac{1}{2}\sum_{k=2}^{\infty }\frac{1}{(k-1)!}\left( -\frac{1}{2}%
\right) ^{k-1}\int_{0}^{s}\mathbb{E}\left[ \left( \mathcal{E}_{g}(t-\tau )-%
\mathcal{E}_{g}(s-\tau )\right) ^{k-1}\right] dU_{g}(\tau ) 
 \notag \\
&+\frac{1}{2}\sum_{k=2}^{\infty }\frac{1}{(k-1)!}\left(
-\frac{1}{2}\right)
^{k-1}\int_{0}^{s}\mathbb{E}\left[ \mathbb{(}\mathcal{E}_{g}(t-\tau ))^{k-1}%
\right] dU_{g}(\tau )  \notag \\
&=-\frac{1}{2}\int_{0}^{s}\mathbb{E}\left[ \mathfrak{B}_{g}(t-\tau )%
\overline{\mathfrak{B}_{g}(s-\tau )}\right] dU_{g}(\tau )+\frac{1}{2}%
\int_{0}^{s}\mathbb{E}\mathfrak{B}_{g}(t-\tau )dU_{g}(\tau ).
\end{align*}
The result follows by inserting $I_{1}$ and $I_{2}$ into
(\ref{ii}) and treating the case $s>t$ analogously.
\end{proof}

We now prove that the wrapped Brownian motion
$\mathfrak{B}_{g}$ can be obtained as a scaling limit of a
transformed continuous-time random walk on a circle.
Let us denote by $\overset{J_{1}}{\Longrightarrow }$ the convergence in the
$J_{1}$ topology and by $\overset{M_{1}}{\Longrightarrow }$ the
convergence in the $M_{1}$ topology in the Skorohod space
$D([0,T),\mathbb{R}^{d})$, for $T>0$ and $d=1,2,...$
(see \cite{whitt} and \cite{mee} for details on $J_{1}$ and $M_{1}$ topologies).

\begin{theorem}
Let $c>0$ and let $Y_{j}^{(c)},$ $j=1,2...$, be i.i.d. random
variables with finite moments and scale parameter $c.$ Let
moreover $J_{j}^{(c)},$ $j=1,2...
$, be i.i.d. random variables, independent of $Y_{j}^{(c)},$ for any $%
j=1,2...$ and for any $c>0$, and such that for $T^{(c)}(ct):=\sum_{j=1}^{[ct]}J_{j}^{(c)}$ the following convergence holds
$\{T^{(c)}(ct)\}_{t \geq 0} \overset{%
J_{1}}{\Rightarrow } \{\mathcal{H}_{g}(t)\}_{t \geq 0},$ as $c\rightarrow +\infty ,$ in $%
D([0,+\infty ),\mathbb{R}^{+}).$ Then%
\begin{equation}
\{e^{i\sum_{j=1}^{N_{t}^{(c)}}Y_{j}^{(c)}}\}_{t \geq 0}\overset{M_{1}}{\Longrightarrow }%
\{\mathfrak{B}_{g}(t)\}_{t \geq 0},\qquad c\rightarrow +\infty ,
\label{con}
\end{equation}
in $D([0,+\infty ),\mathbb{S}_1),$ where $N_{t}^{(c)}:=\max \{n\geq
0:T^{(c)}(n)\leq t\}.$
\end{theorem}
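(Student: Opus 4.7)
The plan is to reduce the statement to the CTRW functional scaling-limit theorem and then push the convergence through the bounded continuous map $x\mapsto e^{ix}$. As a preliminary step I would establish a Donsker-type invariance principle for the spatial random walk: since the i.i.d.\ jumps $Y_j^{(c)}$ have finite moments and scale parameter $c$, the rescaled partial-sum process $S^{(c)}(s):=\sum_{j=1}^{[s]}Y_j^{(c)}$ satisfies $\{S^{(c)}(ct)\}_{t\geq 0}\overset{J_1}{\Longrightarrow}\{B(t)\}_{t\geq 0}$ in $D([0,\infty),\mathbb{R})$ as $c\to+\infty$. Exploiting the fact that the families $\{Y_j^{(c)}\}_j$ and $\{J_j^{(c)}\}_j$ are independent for every $c$, marginal $J_1$-tightness together with the hypothesis on $T^{(c)}$ yields (see \cite{whitt}) the joint convergence
\begin{equation*}
(S^{(c)}(ct),T^{(c)}(ct))\overset{J_{1}}{\Longrightarrow}(B(t),\mathcal{H}_{g}(t))\qquad \text{in }\ D([0,\infty),\mathbb{R}^2),
\end{equation*}
with $B$ and $\mathcal{H}_g$ automatically independent.

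Next, I would invoke the CTRW scaling-limit theorem in the $M_1$ topology due to Meerschaert and Scheffler (see \cite{mee}). Under the standing assumptions on $w$ (namely $a=0$ and tail Lévy measure infinite at the origin) the subordinator $\mathcal{H}_g$ is strictly increasing a.s., hence $\mathcal{E}_g$ is continuous a.s., and the composition theorem delivers
\begin{equation*}
\Big\{\sum_{j=1}^{N_t^{(c)}}Y_j^{(c)}\Big\}_{t\geq 0}=\{S^{(c)}(N_t^{(c)})\}_{t\geq 0}\overset{M_{1}}{\Longrightarrow}\{B(\mathcal{E}_g(t))\}_{t\geq 0}
\end{equation*}
in $D([0,\infty),\mathbb{R})$. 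The final step is the continuous mapping theorem applied to the path-space map $x(\cdot)\mapsto e^{ix(\cdot)}$; since $x\mapsto e^{ix}$ is $1$-Lipschitz from $\mathbb{R}$ into $\mathbb{S}_1$ and the limit path $t\mapsto B(\mathcal{E}_g(t))$ is a.s.\ continuous, this map is $M_1$-continuous at the limit, so that
\begin{equation*}
\{e^{i\sum_{j=1}^{N_t^{(c)}}Y_j^{(c)}}\}_{t\geq 0}\overset{M_1}{\Longrightarrow}\{e^{iB(\mathcal{E}_g(t))}\}_{t\geq 0}=\{\mathfrak{B}_g(t)\}_{t\geq 0}
\end{equation*}
in $D([0,\infty),\mathbb{S}_1)$, which is exactly \eqref{con}.

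The step I expect to be genuinely delicate is the CTRW scaling-limit invocation and, more specifically, the justification that $M_1$ — and not $J_1$ — is the correct topology: the approximating composition $S^{(c)}\circ N^{(c)}$ jumps at the random times at which $N^{(c)}$ increases, while the limit $B\circ\mathcal{E}_g$ is continuous because the flat pieces of $\mathcal{E}_g$ sit precisely over the jump times of $\mathcal{H}_g$. Checking that the hypotheses of the theorem in \cite{mee} apply to the present triangular array $(Y_j^{(c)},J_j^{(c)})$ — in particular the joint $J_1$-convergence established in the preliminary step together with the strict monotonicity of $\mathcal{H}_g$ — is the crucial technical point; the Donsker-type step and the closing continuous mapping step are then essentially routine.
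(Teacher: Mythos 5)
Your proposal is correct and follows essentially the same route as the paper: joint $J_1$ convergence of the space--time random walk via the functional CLT and independence, then the Meerschaert--Scheffler CTRW scaling-limit theorem (the paper cites Theorem 2.1 and Corollary 2.4 of the triangular-array paper \cite{meer1} rather than the monograph \cite{mee}, but it is the same circle of results) to obtain $M_1$ convergence of $S^{(c)}\circ N^{(c)}$ to $B\circ\mathcal{E}_g$, and finally the continuous mapping theorem for $x\mapsto e^{ix}$. Your remarks on why $M_1$ is the right topology and on the role of the infinite L\'evy measure and the a.s.\ continuity of the Brownian paths match the hypotheses the paper checks (disjointness of the discontinuity sets of $B$ and $\mathcal{H}_g$).
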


\begin{proof}
The convergence in (\ref{con}) follows by the application of
Theorem 2.1 and Corollary 2.4 in \cite{meer1}, in the special case
where $A(t)=B(t),$ $t\geq 0:$ indeed, let Disc$(x)$ be the set of
discontinuities of $x$, the assumption that Disc$(\left\{
A(t)\right\} _{t\geq 0})\cap$ Disc$(\left\{
\mathcal{H}_{g}(t)\right\} _{t\geq 0})=\emptyset $ a.s. is
automatically satisfied because, as well-known, it is always
possible to choose a version of Brownian motion such that its
trajectories are continuous with probability one. Moreover, the
L\'{e}vy measure of $\mathcal{H}_{g}$ is infinite on $[0,+\infty
)$ by assumption. By the independence of $J_{j}^{(c)} $and
$Y_{j}^{(c)},$ for any $j=1,2...$ and by the 
 functional central limit theorem, we have that%
\begin{equation*}
\left\{ \sum_{j=1}^{[ct]}Y_{j}^{(c)},T^{(c)}(ct)\right\} _{t\geq 0}\overset{%
J_{1}}{\Longrightarrow }\left\{
B(t),\mathcal{H}_{g}(t)\right\}_{t\geq 0} ,\qquad c\rightarrow
+\infty ,
\end{equation*}%
in the $J_{1}$ topology on $D([0,+\infty ),\mathbb{R}\times
\mathbb{R}^{+}).$ Therefore, by the above mentioned Theorem 2.1 in
\cite{meer1}, the following
convergence holds%
\begin{equation*}
\left\{ \sum_{j=1}^{N_{t}^{(c)}}Y_{j}^{(c)}\right\} _{t\geq 0}\overset{M_{1}}{\Longrightarrow }\left\{B(%
\mathcal{E}_{g}(t))\right\} _{t\geq 0},\qquad c\rightarrow +\infty
,
\end{equation*}%
in the $M_{1}$ topology on $D([0,+\infty ),\mathbb{R}).$ The
result finally follows by applying the continuous mapping theorem
to the function $\phi
(\cdot ):\mathbb{R}\rightarrow \mathbb{C}$ defined as $\phi (x)=e^{ix},$ $%
x\in \mathbb{R}$.
\end{proof}

We refer to \cite{meer1} for the description of some relevant
situations where this kind of convergence can be appropriately
applied.

\subsubsection{The stable case} 

For $g(\theta )=\theta ^{\alpha }$, $\alpha \in (0,1)$, formula \eqref{gx}
holds for $a=0$ and $b=\lim_{\theta \rightarrow +\infty }g(\theta
)/\theta =0$. Moreover, the process $\mathcal{E}_{g}(t)$ reduces to the
inverse of the $\alpha $-stable subordinator (see e.g. \cite{straka}) and
the operator $\mathfrak{D}_{t}^{g}$ coincides with the Caputo
time-fractional derivative of order $\alpha $, namely $\partial ^{\alpha
}/\partial t^{\alpha }$. In this case, if we denote $W_{t}^{g}$ as $W_{t}^{\alpha }$, we have that
\begin{equation}
W_{t}^{\alpha }(f)(z)=\frac{1}{t^{\alpha /2}}\int_{\mathbb{R}}f\left(
ze^{iu}\right) W_{-\frac{\alpha }{2},1-\frac{\alpha }{2}}\left( -\frac{|u|}{%
t^{\alpha /2}}\right) du,  \label{wri}
\end{equation}%
where
\begin{equation*}
W_{\beta ,\gamma }(x)=\sum_{k=0}^{\infty }\frac{x^{k}}{k!\Gamma (\beta
k+\gamma )},
\end{equation*}%
is the Wright function, defined for $\beta, \gamma, x \in \mathbb{C}$. The representation \eqref{wri} follows by the fact
that the fundamental solution of the time-fractional diffusion equation
\begin{equation}
\frac{\partial ^{\alpha }}{\partial t^{\alpha }}u(x,t)=\frac{1}{2}\frac{%
\partial ^{2}}{\partial x^{2}}u(x,t),
\end{equation}%
involving Caputo time-fractional derivatives of order $\alpha \in (0,1)$ is
given by
\begin{equation}
u(x,t)=\frac{1}{t^{\alpha /2}}W_{-\alpha /2,1-\alpha /2}\left( -\frac{|x|}{%
t^{\alpha /2}}\right) ,
\end{equation}%
see for example \cite{MMP}. 

Then, in this case, as a consequence of Theorem 3.1, 
we have the following result.

\begin{coro}
Let $E_{\alpha }(x):=\sum\limits_{j=0}^{\infty }\frac{x^{j}}{\Gamma (\alpha j+1)}$,
for $x,\alpha \in \mathbb{C}$, if $f(z)\in A(D)$, then we have on $D$ that
\begin{equation*}
W_{t}^{\alpha }f(z)=\mathbb{E}f\left( ze^{iB(\mathcal{E}_{\alpha
}(t))}\right) =\int_{\mathbb{R}}f\left( ze^{iu}\right)
\ell_{\alpha }(u,t)du=\sum_{k=0}^{\infty }a_{k}d_{k}(t)z^{k},
\end{equation*}%
where $d_{k}(t)=E_{\alpha }\left( -\frac{k^{2}t^{\alpha }}{2}\right)$,
 $\alpha \in (0,1)$, $\mathcal{E}_{\alpha }(t)$ is the inverse of the
 $\alpha $-stable subordinator and $\ell_{\alpha }$ is the probability
density of the time-changed Brownian motion
 $\mathfrak{B}_{\alpha }(t):=B(\mathcal{E}_{\alpha }(t))$.

Moreover, $W^{\alpha}_tf(z)$ is the unique solution $u(z,t)$ for the
fractional Cauchy problem
\vskip -10pt  
\begin{align}
& \frac{\partial ^{\alpha }u}{\partial t^{\alpha }}(z,t)=\frac{1}{2}\frac{%
\partial ^{2}u}{\partial \varphi ^{2}}(z,t),\\
& (z,t)\in \mathbb{R}%
^{+}\times D,\ z=re^{i\varphi },\ r\in (0,1),\ \varphi \in \lbrack 0,2\pi )\notag
\\
& u(z,0)=f(z),\quad z\in \overline{D},f\in A(D). \label{coro}
\end{align}
\end{coro}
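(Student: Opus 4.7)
The plan is to recognize the corollary as a direct specialization of Theorem \ref{teofc} to the Bernstein function $g(\theta)=\theta^\alpha$, $\alpha\in(0,1)$, with drift $b=0$. As noted in the text, this choice gives tail Lévy measure $w(s)=s^{-\alpha}/\Gamma(1-\alpha)$, so $\mathfrak{D}_t^g$ coincides with the Caputo derivative $\partial^\alpha/\partial t^\alpha$, and $\mathcal{H}_g$ is the $\alpha$-stable subordinator with inverse $\mathcal{E}_\alpha$. Hence Theorem \ref{teofc}(ii) gives at once that $u(z,t)=W_t^\alpha f(z)$ is the unique $A(D)$-valued solution of the fractional Cauchy problem \eqref{coro}, and Theorem \ref{teofc}(i) yields the series representation
$$W_t^\alpha f(z)=\sum_{k=0}^\infty a_k z^k d_k(t),\qquad d_k(t)=\mathbb{E}\bigl[e^{-(k^2/2)\mathcal{E}_\alpha(t)}\bigr].$$

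The key remaining step is to identify $d_k(t)$ with the Mittag-Leffler function. I would proceed through Laplace transforms: by formula \eqref{eq:lt} with $g(\theta)=\theta^\alpha$,
$$\int_0^\infty e^{-\theta t}d_k(t)\,dt=\frac{g(\theta)/\theta}{g(\theta)+k^2/2}=\frac{\theta^{\alpha-1}}{\theta^\alpha+k^2/2},$$
which is the classical Laplace transform of $t\mapsto E_\alpha(-(k^2/2)t^\alpha)$; uniqueness of Laplace inversion then gives $d_k(t)=E_\alpha(-k^2 t^\alpha/2)$. Equivalently, one may note that $t\mapsto E_\alpha(\mu t^\alpha)$ is the unique solution of $\partial_t^\alpha y=\mu y$ with $y(0)=1$, and check that $d_k$ satisfies this relation with $\mu=-k^2/2$ via the governing PDE of Theorem \ref{teofc}(ii) projected onto the Fourier mode $z^k$.

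For the integral representation, the probabilistic identity $W_t^\alpha f(z)=\mathbb{E} f(ze^{iB(\mathcal{E}_\alpha(t))})$ follows from \eqref{eq:c0tg2} after observing that the density $\ell_\alpha(\cdot,t)$ of $B(\mathcal{E}_\alpha(t))$ is even in its spatial argument (inherited from the symmetry of Brownian motion), so one may replace $f(ze^{-iu})$ by $f(ze^{iu})$ without changing the integral. The explicit Wright-function form \eqref{wri} of $\ell_\alpha$ is then just the fundamental solution of the one-dimensional time-fractional heat equation recalled from \cite{MMP} in the paragraph preceding the corollary.

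There is no genuine obstacle: the statement is a textbook-style corollary once Theorem \ref{teofc} is in hand, and the only ingredient not already made explicit in the excerpt is the Laplace transform of the Mittag-Leffler function, which is standard. The only small care needed is the sign issue in the exponent $e^{\pm iu}$, handled by the evenness of $\ell_\alpha$.
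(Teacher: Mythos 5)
Your proposal is correct and follows exactly the route the paper intends: the corollary is stated as a direct consequence of Theorem \ref{teofc} with $g(\theta)=\theta^{\alpha}$, and the only detail to supply is the identification $d_k(t)=\mathbb{E}[e^{-(k^2/2)\mathcal{E}_{\alpha}(t)}]=E_{\alpha}(-k^2t^{\alpha}/2)$ via the Laplace transform $\theta^{\alpha-1}/(\theta^{\alpha}+k^2/2)$, which you carry out correctly. Your remark on the evenness of $\ell_{\alpha}$ resolving the $e^{\pm iu}$ sign discrepancy is a small point the paper leaves implicit, but it does not change the argument.
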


\begin{remark}
Observing that, for $\alpha =1/2$ the following equality in distribution
holds
\begin{equation}
B(\mathcal{E}_{1/2}(t))\overset{d}{=}B_{1}(|B_{2}(t)|),
\end{equation}%
where $B_{1}$ and $B_{2}$ are independent, we have that
\begin{equation}
W_{t}^{1/2}f(z)=\mathbb{E}f\left( ze^{iB_{1}(|B_{2}(t)|)}\right)
\end{equation}%
coincides with the solution to \eqref{coro}, for $\alpha =1/2$.
Therefore, in this special case, we have an iterated Brownian motion on the
circle.
\end{remark}

\begin{remark}
In the stable case, i.e. for $g(\theta )=\theta ^{\alpha }$, the
inverse transform of the $r$-th moment given in \eqref{ee} can be
easily obtained and reads%
\begin{equation*}
\mathbb{E}\left[ \mathfrak{B}_{\alpha }(t)\right] ^{r}=E_{\alpha }
  \left( -\frac{r}{2}t^{\alpha }\right) ,\qquad r\in \mathbb{N}.
\end{equation*}
Moreover, we can
evaluate explicitly the mixed moment in (\ref{eq: mixmom}), by recalling that
 $U_{g}(\tau )=\mathbb{E}\mathcal{E}%
_{\alpha }(\tau )=\tau ^{\alpha }/\Gamma (\alpha +1)$ and that
 $\mathbb{E}\mathfrak{B}_{\alpha}(\tau )=E_{\alpha }(-\tau ^{\alpha }/2)$,
 so that, for $s<t$, we get%
\begin{eqnarray*}
\mathbb{E}[\mathfrak{B}_{\alpha}(t)\overline{\mathfrak{B}_{\alpha}(s)}]
&=&E_{\alpha
}\left( -\frac{t^{\alpha }}{2}\right) +\frac{\alpha }{2\Gamma (\alpha +1)}%
\int_{0}^{s}E_{\alpha }\left( -\frac{(t-\tau )^{\alpha
}}{2}\right) \tau
^{\alpha -1}d\tau \\
&=&E_{\alpha }\left( -\frac{t^{\alpha }}{2}\right) +\frac{t^{\alpha }}{%
2\Gamma (\alpha )}\int_{0}^{s/t}E_{\alpha }\left( -\frac{t^{\alpha
}(1-y)^{\alpha }}{2}\right) y^{\alpha -1}dy \\
&=&E_{\alpha }\left( -\frac{t^{\alpha }}{2}\right) +\frac{t^{\alpha }}{%
2\Gamma (\alpha )}\sum_{j=0}^{\infty }\frac{\left( -t^{\alpha }/2\right) ^{j}%
}{\Gamma (\alpha j+1)}B\left( \alpha j+1,\alpha ;s/t\right) ,
\end{eqnarray*}%
where $B(a,b;x):=\int_{0}^{x}z^{a-1}(1-z)^{b-1}dz$ is the
incomplete beta function. Since, for $s\rightarrow t$, the
previous expression reduces to one, we can write, for any $s,t\geq 0$,%
\begin{align}
\mathbb{E}[\mathfrak{B}_{\alpha}(t)\overline{\mathfrak{B}_{\alpha}(s)}]
&=E_{\alpha }\left( -\frac{(t\vee s)^{\alpha }}{2}\right)\notag
\\
&\quad+\frac{(t\vee s)^{\alpha }}{2\Gamma (\alpha )}\sum_{j=0}^{\infty
}\frac{\left( -(t\vee s)^{\alpha }/2\right) ^{j}}{\Gamma (\alpha
j+1)}B\left( \alpha j+1,\alpha ;(t\wedge s)/(t\vee s)\right) .
\notag
\end{align}%
Finally, it is easy to check that, for $\alpha =1$, (\ref{bb}) reduces to $%
e^{-[t\vee s-t\wedge s]/2}$ as it should be, since, in this case $\mathcal{E}%
_{\alpha }(t)=t$, a.s. for any $t\geq 0.$
\end{remark}

\subsubsection{The tempered stable case} 

For $g(\theta )=(\theta +\mu )^{\alpha }-\mu ^{\alpha }$, for $\theta
,\mu \geq 0$, $a=b=0$, and the process $\mathcal{E}_{g}$ reduces to the
inverse of the tempered stable subordinator (in the next
$\mathcal{E}_{T}$) and the operator $\mathfrak{D}_{t}^{g}$ coincides with
the tempered derivative (see e.g. \cite{Baumer1} and \cite{beghin}) denoted by
\begin{equation}
D_{t}^{\alpha ,\mu }f(t):=e^{-\mu t}\frac{\partial ^{\alpha }}{\partial t^{\alpha }}(e^{\mu t}f(t))-\mu
^{\alpha }f(t)=\left( \mu +\frac{d}{dt}\right) ^{\alpha }f(t).  \label{tem}
\end{equation}%
Since in this case the tail L\'{e}vy measure is given by
\begin{equation}
w(s)=\frac{\alpha \mu ^{\alpha }\Gamma (-\alpha ,\mu s)}{\Gamma (1-\alpha )},
\end{equation}%
where
\begin{equation*}
\Gamma (\rho ,x)=\int_{x}^{+\infty }e^{-\omega }\omega ^{\rho -1}d\omega ,
\end{equation*}%
is the upper incomplete gamma function. The tempered derivative \eqref{tem}
can be also expressed in a convolution form, as follows
\begin{equation}
D_{t}^{\alpha ,\mu }f(t)=\frac{\alpha \mu ^{\alpha }}{\Gamma (1-\alpha )}%
\int_{0}^{t}\frac{\partial }{\partial z}f(t-z)\Gamma (-\alpha ,\mu z)dz.
\end{equation}

\begin{coro}
Let $W_{t}^{g}$ be denoted as $W_{t}^{\alpha ,\mu }$,
for $g(\theta )=(\theta +\mu )^{\alpha }-\mu ^{\alpha }$, then the complex integral
\begin{equation}
W_{t}^{g}f(z)=\int_{\mathbb{R}}f\left( ze^{iu}\right) \ell_{\alpha
,\mu }(u,t)du=\sum_{k=0}^{\infty }a_{k}z^{k}d_{k}(t),
\end{equation}%
where $\ell_{\alpha ,\mu }$ is the probability density of the
time-changed Brownian motion $B(\mathcal{E}_{T}(t))$ and
\begin{equation}
d_{k}(t)=\frac{1}{\Gamma (\alpha )}\int_{0}^{t}\Gamma (\alpha ;\mu (t-z))%
\frac{e^{-\mu z}}{z}E_{\alpha ,0}\left( (\mu ^{\alpha }-\frac{k^{2}}{2}%
)z^{\alpha }\right) dz
\end{equation}%
is the unique solution of the fractional Cauchy problem
\begin{align}
& D_{t}^{\alpha ,\mu }u(z,t)=\frac{1}{2}\frac{\partial ^{2}u}{\partial
\varphi ^{2}}(z,t),\\
& (z,t)\in D\times \mathbb{R}^{+},\
, \ z=re^{i\varphi },\ r\in (0,1),\ \varphi \in \lbrack 0,2\pi )\notag
\\
& u(z,0)=f(z),\quad z\in \overline{D},f\in A(D).
\end{align}
\end{coro}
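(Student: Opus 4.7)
The plan is to obtain this corollary as a specialization of Theorem \ref{teofc} to the particular Bernstein function $g(\theta)=(\theta+\mu)^{\alpha}-\mu^{\alpha}$, and then to identify the coefficients $d_{k}(t)$ explicitly by Laplace inversion. First, I would verify that this $g$ is indeed a Bernstein function with $a=b=0$ and L\'{e}vy measure having tail $w(s)=\alpha\mu^{\alpha}\Gamma(-\alpha,\mu s)/\Gamma(1-\alpha)$; this is a standard computation, obtained by writing $(\theta+\mu)^{\alpha}-\mu^{\alpha}=\int_{0}^{\infty}(1-e^{-s\theta})\,\nu(ds)$ with $\nu(ds)=\alpha\mu^{\alpha}e^{-\mu s}s^{-\alpha-1}/\Gamma(1-\alpha)\,ds$ and integrating the exponential tail. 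Consequently, the generalized derivative $\mathfrak{D}_{t}^{g}$ coincides with the tempered derivative $D_{t}^{\alpha,\mu}$, which one checks by inserting the expression for $w(s)$ in \eqref{eq:cd} and performing an integration by parts, recovering the convolution representation stated just before the corollary.

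Once this identification is made, Theorem \ref{teofc}(i)--(ii) applied verbatim to this choice of $g$ yields at once that $W_{t}^{\alpha,\mu}f(z)=\sum_{k=0}^{\infty}a_{k}z^{k}d_{k}(t)$ with $d_{k}(t)=\mathbb{E}[e^{-k^{2}\mathcal{E}_{T}(t)/2}]$, that this series belongs to $A(D)$ for every $t$, and that it is the unique solution of the tempered Cauchy problem with initial condition $f$. Thus the only genuinely new task is to write $d_{k}(t)$ in the closed form claimed in the corollary.

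For this last step I would work on the Laplace side. From Theorem \ref{teofc}(i),
\begin{equation*}
\int_{0}^{\infty}e^{-\theta t}d_{k}(t)\,dt=\frac{g(\theta)/\theta}{g(\theta)+k^{2}/2}=\frac{(\theta+\mu)^{\alpha}-\mu^{\alpha}}{\theta\bigl[(\theta+\mu)^{\alpha}-\mu^{\alpha}+k^{2}/2\bigr]}.
\end{equation*}
I would factor this ratio using the identity
\begin{equation*}
\frac{(\theta+\mu)^{\alpha}-\mu^{\alpha}}{(\theta+\mu)^{\alpha}-(\mu^{\alpha}-k^{2}/2)}=1-\frac{k^{2}/2}{(\theta+\mu)^{\alpha}-(\mu^{\alpha}-k^{2}/2)}
\end{equation*}
and recognize in the second term the Laplace transform of an exponentially shifted Mittag-Leffler function, since $\int_{0}^{\infty}e^{-\theta t}e^{-\mu t}t^{\alpha-1}E_{\alpha,\alpha}((\mu^{\alpha}-k^{2}/2)t^{\alpha})\,dt=[(\theta+\mu)^{\alpha}-(\mu^{\alpha}-k^{2}/2)]^{-1}$. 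Dividing by $\theta$ amounts to a time integration (i.e., convolution with the constant function $1$), and the factor $\Gamma(\alpha;\mu(t-z))$ appearing in the target expression arises precisely from convolving $1$ against $e^{-\mu z}z^{\alpha-1}/\Gamma(\alpha)$. After reassembling these pieces and rewriting $t^{\alpha-1}E_{\alpha,\alpha}(\cdot)$ via the identity relating $E_{\alpha,\alpha}$ to $E_{\alpha,0}$ through $E_{\alpha,0}(x)/z=x\,E_{\alpha,\alpha}(x)$-type manipulations, the stated integral representation for $d_{k}(t)$ follows.

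The main obstacle, as expected, lies entirely in this last Laplace inversion: matching the factor $z^{-1}e^{-\mu z}E_{\alpha,0}((\mu^{\alpha}-k^{2}/2)z^{\alpha})$ with the correct combination of shifted Mittag-Leffler Laplace pairs, and correctly bookkeeping the convolution with $\Gamma(\alpha;\mu(t-z))$ so that the boundary terms (in particular the contribution at $z=0$, where $E_{\alpha,0}$ has a removable factor) match. The rest of the corollary, including uniqueness and the stochastic representation $B(\mathcal{E}_{T}(t))$, is an immediate transcription of Theorem \ref{teofc} together with the identification of $\mathcal{E}_{g}$ with the inverse tempered stable subordinator $\mathcal{E}_{T}$.
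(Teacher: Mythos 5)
Your overall strategy is the same as the paper's: check that $g(\theta)=(\theta+\mu)^{\alpha}-\mu^{\alpha}$ falls under the hypotheses of Theorem \ref{teofc}, read off the series representation, uniqueness and the identification $\mathcal{E}_{g}=\mathcal{E}_{T}$, and then recover $d_{k}(t)$ by inverting $\tilde{d}_{k}(\theta)=\frac{(\theta+\mu)^{\alpha}-\mu^{\alpha}}{\theta[(\theta+\mu)^{\alpha}-\mu^{\alpha}+k^{2}/2]}$ via Mittag--Leffler Laplace pairs (the paper gives no more detail than this). Your additive split is correct and cleanly inverts to
\begin{equation*}
d_{k}(t)=1-\frac{k^{2}}{2}\int_{0}^{t}e^{-\mu z}z^{\alpha-1}E_{\alpha,\alpha}\Bigl(\bigl(\mu^{\alpha}-\tfrac{k^{2}}{2}\bigr)z^{\alpha}\Bigr)\,dz,
\end{equation*}
which does satisfy $d_{k}(0)=1$.

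The gap is in the final ``reassembly'' into the printed closed form, and it is not merely a bookkeeping issue. Convolving $1$ against $e^{-\mu z}z^{\alpha-1}/\Gamma(\alpha)$ produces the \emph{lower} incomplete gamma function $\gamma(\alpha;\mu t)/(\mu^{\alpha}\Gamma(\alpha))$, not the upper one $\Gamma(\alpha;\mu(t-z))$, so the manipulation you describe does not generate the stated kernel. The route the paper intends is the multiplicative factorization $\tilde{d}_{k}(\theta)=\frac{(\theta+\mu)^{\alpha}-\mu^{\alpha}}{\theta(\theta+\mu)^{\alpha}}\cdot\frac{(\theta+\mu)^{\alpha}}{(\theta+\mu)^{\alpha}-(\mu^{\alpha}-k^{2}/2)}$, where the first factor is $\mathcal{L}\{\Gamma(\alpha;\mu\,\cdot)/\Gamma(\alpha)\}$ and the second equals $1+\frac{\mu^{\alpha}-k^{2}/2}{(\theta+\mu)^{\alpha}-(\mu^{\alpha}-k^{2}/2)}$, i.e.\ a Dirac mass plus $\frac{e^{-\mu z}}{z}E_{\alpha,0}((\mu^{\alpha}-k^{2}/2)z^{\alpha})$. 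Carrying this out yields
\begin{equation*}
d_{k}(t)=\frac{\Gamma(\alpha;\mu t)}{\Gamma(\alpha)}+\frac{1}{\Gamma(\alpha)}\int_{0}^{t}\Gamma(\alpha;\mu(t-z))\,\frac{e^{-\mu z}}{z}E_{\alpha,0}\Bigl(\bigl(\mu^{\alpha}-\tfrac{k^{2}}{2}\bigr)z^{\alpha}\Bigr)\,dz,
\end{equation*}
which differs from the corollary's displayed formula by the additive term $\Gamma(\alpha;\mu t)/\Gamma(\alpha)$ coming from the delta. That term cannot be dropped: at $t=0$ the printed integral vanishes while $d_{k}(0)=\mathbb{E}[e^{-k^{2}\mathcal{E}_{T}(0)/2}]=1$, and at $\mu=0$ the printed integral equals $E_{\alpha}(-k^{2}t^{\alpha}/2)-1$ rather than $E_{\alpha}(-k^{2}t^{\alpha}/2)$. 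So your inversion is right, but the claimed identification with the statement's formula would fail; you should either keep your own closed form or supply the missing additive term.
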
 

\begin{proof}
The result follows by using Theorem 3.1 
and by inverting the Laplace transform
\begin{equation}
\tilde{d}_k(\theta) = \frac{(\theta+\mu)^\alpha-\mu^\alpha}{\theta[%
(\theta+\mu)^\alpha-\mu^\alpha+\frac{k^2}{2}]}.
\end{equation}
This can be done by applying the well-known formula of the Laplace transform
of the two-parameter Mittag-Leffler function (see e.g. \cite{fra}) and
recalling that (see \cite{beghin1})
\begin{equation}
\mathcal{L}\{\Gamma(\alpha; \mu x); \theta\} = \frac{(\mu+\theta)^\alpha-%
\mu^\alpha}{\theta(\mu+\theta)^\alpha}.
\end{equation}
\end{proof}

\begin{remark}
Note that, in the special case $\mu =0$, the expression of $d_{k}(t)$
reduces, for any $k\in \mathbb{N}$ and $t\geq 0$ to
\begin{equation}
d_{k}(t)=\int_{0}^{t}\frac{1}{z}E_{\alpha ,0}\left( -\frac{k^{2}}{2}%
z^{\alpha }\right) dz=E_{\alpha}\left( -\frac{k^{2}}{2}t^{\alpha
}\right) ,
\end{equation}%
which coincides with the stable case.
\end{remark}


\section{Higher dimensional extensions}\label{sec5} 

\setcounter{section}{5} \setcounter{equation}{0} \setcounter{theorem}{0} 

Following \cite{gal}-\cite{gal1}, we can also consider the $n$-dimensional fractional Cauchy problem extending \eqref{cprob}-\eqref{cprob1}. In this case, the probabilistic interpretation  of the solution for the complexified multidimensional fractional heat equation can be obtain by means of $n$-dimensional Brownian motions time changed with the
inverse of the subordinator $\mathcal{E}_{g}$.

 Let $%
z_{1},z_{2},\cdots ,z_{n}\in \overline{D}$ and $f(z_{1},z_{2},\cdots ,z_{n})\in
A(D^{n})$, which means that $f(z_1,z_2,...,z_n)$ belongs to $A(D)$ for each complex variable $z_1,z_2,...,z_n.$ Let us deal with the arguments developed in Section \ref{sec3} and extend them to the multidimensional case. Let us  introduce the following complex integral
\begin{align} \label{H}
& \overline{W}_{t}^{g}f(z_{1},z_{2},\cdots ,z_{n}) \\
& =\frac{1}{(2\pi )^{n/2}}\int_{-\infty }^{+\infty }\dots
\int_{-\infty }^{+\infty }f(z_{1}e^{-iu_{1}},\dots
z_{n}e^{-iu_{n}})du_1\cdots du_n
\notag \\
&\quad\times\int_{0}^{+\infty
}\frac{e^{-\frac{u_{1}^{2}+u_{2}^{2}\dots
+u_{n}^{2}}{2y}}}{y^{n/2}}m_g(y,t)dy\notag\\
& =\mathbb{E}f\left( z_{1}e^{-iB_{1}(\mathcal{E}_{g}(t))},z_{2}e^{-iB_{2}(%
\mathcal{E}_{g}(t))},\dots ,z_{n}e^{-iB_{n}(\mathcal{E}_{g}(t))}\right)
\notag\\
& =\mathbb{E}f\left(\mathfrak B_{1,g}^{z_1}(t),\mathfrak B_{2,g}^{z_2}(t),\dots ,\mathfrak B_{n,g}^{z_n}(t)\right) ,
\notag
\end{align}%
for $%
z_{1},z_{2},\cdots ,z_{n}\in \overline{D}.$ Furthermore, let $\mathfrak B_{k,g}^{z_k}:=\{z_{k}e^{-iB_{k}(\mathcal{E}_{g}(t))}\}_{t \geq 0}$ with $z_k=r_k e^{i\varphi_k}, r_k\in(0,1],\varphi_k\in [0,2\pi),$ and $\{B_k(t)\}_{t\geq0}$ independent standard Brownian motions, for $k=1,2,...,n.$ Therefore, the time-changed process
$$\mathfrak B_g^{z_1,...,z_n}:=\left\{\left(\mathfrak B_{1,g}^{z_1}(t),\mathfrak B_{2,g}^{z_2}(t),\dots ,\mathfrak B_{n,g}^{z_n}(t)\right)\right\}_{t\geq 0}$$
represents a $n$-dimensional wrapped Brownian motion on the $n$-dimensional circle $\mathbb S^n:={\mathbb S_{r_1}\times\cdots\times\mathbb S_{r_n}},$ where the coordinate processes are the circular Brownian motions \eqref{eq:sucb}  with random time $\mathcal E_g$ (and then they are not independent). Therefore, simple calculations show that $\mathfrak B_g^{z_1,...,z_n}$ has  covariance matrix  with entries
\begin{align*}
q_{i,j}&=\mathbb E[\mathfrak B_{i,g}^{z_i}(t)\overline{\mathfrak B_{j,g}^{z_j}(t)}]- \mathbb E[\mathfrak B_{i,g}^{z_i}(t)]\mathbb E[\overline{\mathfrak B_{j,g}^{z_j}(t)}] \\
&=
\begin{cases}
z_iz_j\{\mathbb E[e^{-2 \mathcal E_g(t)}]-(\mathbb E[e^{-\frac{ \mathcal E_g(t)}{2}}])^2\},& i\neq j,\\
z_i^2\{1-(\mathbb E[e^{-\frac{ \mathcal E_g(t)}{2}}])^2\},& i=j.
\end{cases}
\end{align*}

Now, we recall that, by the multivariate Taylor's expansion, we can write
\begin{equation*}
f(z_{1},...,z_{n})=\sum_{k_{1},...,k_{n}=0}^{\infty
}a_{k_{1},...,k_{n}}z_{1}^{k_{1}}\cdot \cdot \cdot z_{n}^{k_{n}}.
\end{equation*}%
Then, we have the following result representing the multidimensional version of Theorem \ref{teofc}.

\begin{theorem}
(i) If $f\in A(D^{n})$, then we have that the complex integral \eqref{H}
can be written as%
\begin{equation}
\overline{W}_{t}^{g}f(z_{1},z_{2},...
, z_{n})=\sum_{k_{1},...,k_{n}=0}^{\infty
}a_{k_{1},...,k_{n}}z_{1}^{k_{1}}\cdot \cdot \cdot
z_{n}^{k_{n}}d_{k_{1},...,k_{n}}(t),  \label{H2}
\end{equation}%
where $d_{k_{1},...,k_{n}}(t )= \mathbb E[e^{-\frac{\mathcal E_g(t)}{2}\sum_{j=1}^n k_j^2}].$ Furthermore
\begin{align}\label{eq:mtl}
R_\theta \overline{W}_{t}^{g}f(z_{1},z_{2},...
,z_{n})&:=\int_0^\infty e^{-\theta t}   \overline{W}_{t}^{g}f(z_{1},z_{2},...
,z_{n}) dt\\
&=\sum_{k_{1},...,k_{n}=0}^{\infty
}a_{k_{1},...,k_{n}}z_{1}^{k_{1}}\cdot \cdot \cdot
z_{n}^{k_{n}}\tilde{d}_{k_{1},...,k_{n}}(\theta),\notag
\end{align}
where
\begin{equation}
\widetilde{d}_{k_{1},...,k_{n}}(\theta )=\frac{g(\theta )/\theta }{%
g(\theta )+(k_{1}^{2}+...+k_{n}^{2})/2}.  \label{dk}
\end{equation}

(ii) Let $t>0,$ $f\in A(D^n)$ and $z_1,...,z_n\in \overline D.$ The integral operator \eqref{H} is the unique solution $$u(z_{1},z_{2},\dots ,z_{n},t)=\overline{W}_{t}^{g}f(z_{1},z_{2},...
, z_{n})$$
(that belongs to $A(D)$ for each complex variable $z_k$) of the fractional Cauchy problem
\begin{align}
&\mathfrak D_{t}^{g}u(z_{1},z_{2},\dots ,z_{n},t) =\frac12\left[ \frac{\partial ^{2}}{\partial
\varphi _{1}^{2}}+...+\frac{\partial ^{2}}{\partial \varphi _{n}^{2}}\right]%
u(z_{1},z_{2},\dots ,z_{n},t),    \label{d1}\\
&z_1=r_1e^{i\varphi_1},...,z_n=r_ne^{i\varphi_n}\in D\setminus{\{0\}},\notag \\
& u(z_{1},z_{2},\dots ,z_{n},0)=f(z_{1},z_{2},\dots ,z_{n}),
\end{align}%
where $\varphi_k$ is the principal value of $z_k$.
\end{theorem}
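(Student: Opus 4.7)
The plan is to reproduce the one-dimensional argument of Theorem \ref{teofc} in multi-index form, the only new structural ingredient being the common time-change $\mathcal{E}_g(t)$ shared by the $n$ independent Brownian motions $B_1,\dots,B_n$. For part (i), I would start from the multivariate Taylor expansion of $f \in A(D^n)$. Since the series $f(z_1 e^{-iu_1},\dots,z_n e^{-iu_n}) = \sum_{k_1,\dots,k_n} a_{k_1,\dots,k_n} z_1^{k_1}\cdots z_n^{k_n} e^{-i(k_1 u_1 + \dots + k_n u_n)}$ converges absolutely and uniformly on compacts, Fubini's theorem allows interchange of summation with the Gaussian integrations in \eqref{H}, yielding
\[
\overline{W}_t^g f(z_1,\dots,z_n) = \sum_{k_1,\dots,k_n=0}^{\infty} a_{k_1,\dots,k_n} z_1^{k_1}\cdots z_n^{k_n}\, \mathbb{E}\Bigl[\prod_{j=1}^n e^{-ik_j B_j(\mathcal{E}_g(t))}\Bigr].
\]
Conditioning on $\mathcal{E}_g(t)$ and using independence of the $B_j$, each factor contributes $e^{-k_j^2 \mathcal{E}_g(t)/2}$, whose product equals $e^{-(k_1^2+\dots+k_n^2)\mathcal{E}_g(t)/2}$; the outer expectation then produces $d_{k_1,\dots,k_n}(t)$. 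Formula \eqref{dk} follows from \eqref{eq:lt} applied to $e^{-(k_1^2+\dots+k_n^2)s/2}$, after another appeal to Fubini.

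For part (ii), I would take the time-Laplace transform of \eqref{d1} and match series coefficients exactly as in the derivation of \eqref{a} and \eqref{b}. Invoking formula (2.18) of \cite{meetoa} for the Laplace transform of $\mathfrak{D}_t^g$, the left-hand side of \eqref{d1} transforms into
\[
\frac{g(\theta)}{\theta}\Bigl[\sum_{k_1,\dots,k_n} \frac{a_{k_1,\dots,k_n} z_1^{k_1}\cdots z_n^{k_n}\, g(\theta)}{g(\theta) + (k_1^2+\dots+k_n^2)/2} - f(z_1,\dots,z_n)\Bigr].
\]
On the other hand, $z_k = r_k e^{i\varphi_k}$ gives $\partial^2 z_k^{k_j}/\partial \varphi_k^2 = -k_j^2 z_k^{k_j}$, so termwise differentiation of \eqref{eq:mtl} shows that $\tfrac12\sum_{k=1}^n \partial^2_{\varphi_k}$ inserts the factor $-(k_1^2+\dots+k_n^2)/2$ into each term; elementary algebra then confirms equality of the two expressions. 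Together with the obvious initial condition $\overline{W}_0^g f = f$ from \eqref{H2}, this yields the required identity, and uniqueness in $A(D^n)$ follows from the Banach-space statement of Theorem 2.1 in \cite{chen}, applied in each coordinate.

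The main technical obstacle is justifying the termwise differentiation in each $\varphi_k$ and the interchange of derivative with summation on $D^n\setminus\{0\}$. This is controlled by the rapid decay $0\le d_{k_1,\dots,k_n}(t) \le \mathbb{E}\, e^{-(k_1^2+\dots+k_n^2)\mathcal{E}_g(t)/2}$ for $t>0$, combined with boundedness of the coefficients $a_{k_1,\dots,k_n}$ inherited from the analyticity of $f$ on $\overline{D^n}$; together these guarantee absolute and uniform convergence of the differentiated series on compact subsets of $D^n$, so all interchanges are legitimate. Continuity of $\overline{W}_t^g f$ on $\overline{D^n}$ follows, as in Theorem \ref{teofc}, from a multivariate modulus-of-continuity estimate, and analyticity in each $z_k$ from the one-dimensional statement applied coordinatewise.
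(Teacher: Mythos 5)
Your proposal is correct and follows essentially the same route as the paper: multivariate Taylor expansion plus Fubini and conditioning on $\mathcal{E}_g(t)$ for part (i), and matching time-Laplace transforms of both sides of the equation termwise for part (ii), with uniqueness delegated to Theorem 2.1 of \cite{chen}. The only (harmless) deviations are that you derive \eqref{dk} directly from \eqref{eq:lt} where the paper cites \cite{BR}, and that you supply convergence justifications the paper leaves implicit.
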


\begin{proof}
(i) The representation \eqref{H2} follows by writing \eqref{H} as follows%
\begin{align*}
&\overline{W}_{t}^{g}f(z_{1},z_{2},\cdots ,z_{n}) \\
&=\sum_{k_{1},...,k_{n}=0}^{\infty }a_{k_{1},...,k_{n}}z_{1}^{k_{1}}\cdot
\cdot \cdot z_{n}^{k_{n}}\int_{-\infty }^{+\infty }du_{n}\dots \int_{-\infty
}^{+\infty }du_{1}e^{-i\sum_{j=1}^n u_{j}k_{j}} \\
&\quad\times \int_{0}^{+\infty }\frac{e^{-\frac{u_{1}^{2}+u_{2}^{2}\dots
+u_{n}^{2}}{2y}}}{(2\pi y)^{n/2}}m_g(y,t)dy \\
&=\sum_{k_{1},...,k_{n}=0}^{\infty
}a_{k_{1},...,k_{n}}z_{1}^{k_{1}}\cdot
\cdot \cdot z_{n}^{k_{n}}\mathbb{E}[%
e^{-i\sum_{j=1}^n k_{j}B_{j}(\mathcal{E}_{g}(t))}]\\
&=\sum_{k_{1},...,k_{n}=0}^{\infty
}a_{k_{1},...,k_{n}}z_{1}^{k_{1}}\cdot
\cdot \cdot z_{n}^{k_{n}} \mathbb E[e^{-\frac{\mathcal E_g(t)}{2}\sum_{j=1}^n k_j^2}].
\end{align*}%
Since the time-Laplace transform of $\mathbb{E}%
e^{-i\sum_{j=1}^n k_{j}B_{j}(\mathcal{E}_{g}(t))}$
coincides with \eqref{dk} (see, for example, \cite{BR}), the result \eqref{eq:mtl} holds true.

(ii) Analogously to the one-dimensional case, we take the time-Laplace transform
of both sides of \eqref{d1}, as follows
\begin{align}
& \mathcal{L}\{D_{t}^{g}\overline{W}_{t}^{g}(f)(z_{1},...,z_{n});\theta \}
\notag \\
& =\frac{g(\theta )}{\theta }\bigg[\sum_{k_{1}=0,...,k_{n}=0}^{\infty }%
\frac{a_{k_{1},...k_{n}}z_{1}^{k_{1}},...,z_{n}^{k_{n}}g(\theta )}{%
g(\theta )+(k_{1}^{2},...,k_{n}^{2})/2}-f(z_{1},...,z_{n})\bigg]  \label{d2}
\end{align}%
and
\begin{align}\label{d3}
&\frac{1}{2}\left[ \frac{\partial ^{2}}{\partial \varphi _{1}^{2}}+...+%
\frac{\partial ^{2}}{\partial \varphi _{n}^{2}}\right] \mathcal{L}\{%
\overline{W}_{t}^{g}(f)(z_{1},...,z_{n});\theta \} \\
&=-\frac{1}{2}\sum_{k_{1}=0,...,k_{n}=0}^{\infty }\frac{g(\theta )/\theta
}{\frac{(k_{1}^{2}+...+k_{n}^{2})}{2}+g(\theta )}%
a_{k_{1},...,k_{n}}(k_{1}^{2}+...+k_{n}^{2})z_{1}^{k_{1}}\cdot \cdot \cdot
z_{n}^{k_{n}}.  \notag  \label{bb}
\end{align}
Therefore \eqref{d2}, \eqref{d3} and \eqref{H2} allow to conclude the proof.
\end{proof}



\end{document}